\begin{document}
%
%
\newtheorem{theorem}{Theorem}
\newtheorem{algorithm}[theorem]{Algorithm}
\newtheorem{conjecture}[theorem]{Conjecture}
\newtheorem{axiom}[theorem]{Axiom}
\newtheorem{corollary}[theorem]{Corollary}
\newtheorem{definition}[theorem]{Definition}
\newtheorem{example}[theorem]{Example}
\newtheorem{question}[theorem]{Question}
\newtheorem{lemma}[theorem]{Lemma}
\newtheorem*{case1}{Case 1}
\newtheorem*{case2}{Case 2}
\newtheorem{fact}[theorem]{Fact}
\newtheorem{proposition}[theorem]{Proposition}
\newcounter{case_counter}\setcounter{case_counter}{1}
\newtheorem{claim}{Claim}[case_counter]
\newtheorem*{claim1}{Claim 1}
\newtheorem*{claim2}{Claim 2}

%
%
\newcommand{\OMIT}[1]{}
\def\N{\mathbb{N}}
\def\Z{\mathbb{Z}}
\def\R{\mathbb{R}}
\def\P{\mathcal{P}}
\def\Q{\mathcal{Q}}
\def\C{\mathcal{C}}
\def\U{\mathcal{U}}
\def\E{\mathbb{E}}
\def\F{\mathcal{F}}
\def\eul{\rm{e}}
\def\e{\rm{e}}
\renewcommand{\Gamma}{\varGamma}
\renewcommand{\epsilon}{\varepsilon}
\newcommand{\eps}{\varepsilon}
\newcommand{\floor}[1]{\lfloor{#1}\rfloor}
\newcommand{\ceil}[1]{\lceil{#1}\rceil}
\renewcommand{\bar}{\overline}
\renewcommand{\hat}{\widehat}
\newcommand{\ie}{i.e.\ }
\newcommand{\etc}{etc.\ }
\newcommand{\wrt}{w.r.t.\ }
\newcommand{\whp}{w.h.p.\ }
\newcommand{\sm}{\setminus}
\newcommand{\nib}[1]{\noindent {\bf #1}}
\newcommand{\cproof}[2]{\nib{#1} #2 \qed\medskip}

%
%
\def\COMMENT#1{}
\def\TASK#1{}
\def\noproof{{\unskip\nobreak\hfill\penalty50\hskip2em\hbox{}\nobreak\hfill%
       $\square$\parfillskip=0pt\finalhyphendemerits=0\par}\goodbreak}
\def\endproof{\noproof\bigskip}
\newdimen\margin   
\def\textno#1&#2\par{%
   \margin=\hsize
   \advance\margin by -4\parindent
          \setbox1=\hbox{\sl#1}%
   \ifdim\wd1 < \margin
      $$\box1\eqno#2$$%
   \else
      \bigbreak
      \hbox to \hsize{\indent$\vcenter{\advance\hsize by -3\parindent
      \sl\noindent#1}\hfil#2$}%
      \bigbreak
   \fi}
\def\proof{\removelastskip\penalty55\medskip\noindent{\bf Proof. }}
\def\enddiscard{}
\long\def\discard#1\enddiscard{}

%
%
\title{Arbitrary Orientations Of Hamilton Cycles In Oriented Graphs}
\author{Luke Kelly}
\begin{abstract}
We use a randomised embedding method to prove that for
all~$\alpha>0$ any sufficiently large oriented graph~$G$ with
minimum in-degree and out-degree~$\delta^+(G),\delta^-(G)\geq
(3/8+\alpha)|G|$ contains every possible orientation of a
Hamilton cycle. This confirms a conjecture of H\"aggkvist and
Thomason.
\end{abstract}
\maketitle

\section{Introduction}\label{sec:intro}

An \emph{oriented graph} is a loop-free simple graph where each
edge is given an orientation. A \emph{directed graph (digraph)}
is an oriented graph where we allow one edge in each direction
between each pair of vertices, that is, we allow cycles of
length 2. The \emph{minimum semi-degree~$\delta^0(G)$} of an
oriented graph~$G$ (or a digraph) is the minimum of its minimum
outdegree~$\delta^+(G)$ and its minimum indegree~$\delta^-(G)$.

A fundamental result of Dirac states that a minimum degree
of~$|G|/2$ guarantees a Hamilton cycle in any undirected
graph~$G$ on at least~3 vertices. Following this result several
weaker conditions guaranteeing a Hamilton cycle have been
found. One of the famous of these is Ore's theorem, which
states that if~$d(x)+d(y)\geq |G| \geq 3$ for all $x\neq y\in
V(G)$ with $xy\not\in E(G)$ then~$G$ contains a Hamilton cycle.
In some sense the weakest possible condition of this type is
Chv\'atal's theorem.\footnote{Whilst it is widely regarded as
such, it should be noted that Chv\'atal's theorem does not
quite imply Ore's theorem.} This gives a condition on the
(ordered) degree sequence of a graph which forces a Hamilton
cycle, such that for any (graphic) degree sequence not
satisfying Chv\'atal's conditions there exists a graph with a
degree sequence dominated by that sequence not containing a
Hamilton cycle.

There is an analogue of Dirac's theorem for digraphs due to
Ghouila-Houri~\cite{GhouilaHouri} which states that every
digraph~$D$ with minimum semi-degree at least~$|D|/2$ contains
a  directed Hamilton cycle. As with Dirac's theorem, taking two
disjoint cliques of as equal size as possible demonstrates that
this minimum degree condition can not be improved.

Thomassen~\cite{thomassen_79} asked the natural question of
whether there exists an analogous result for oriented graphs,
where one expects to be able to obtain a weaker degree
condition than the bounds needed for digraphs.
H\"aggkvist~\cite{HaggkvistHamilton} constructed an example in
1993 showing that a minimum semi-degree of~$(3n-4)/8$ was
necessary and conjectured that this was also sufficient. With
Thomason~\cite{HaggkvistThomasonHamilton} he showed in 1997
that for any~$\alpha>0$ every sufficiently large oriented
graph~$G$ with minimum semi-degree at least $(5/12 + \alpha)
|G|$ has a directed Hamilton cycle. The author, together with
K\"uhn and Osthus~\cite{kelly_kuhn_osthus_hc_orient}, finally
confirmed in 2008 that, up to a linear error term,~$3|G|/8$ is
indeed the correct bound. Following this Keevash, K\"uhn and
Osthus improved this to an exact result.

\begin{theorem}[Keevash, K\"uhn and Osthus
\cite{KKO_exact}]\label{thm:ham_exact} There exists~$n_0$ such
that every oriented graph~$G$ on $n\geq n_0$ vertices with
$\delta^0(G)\geq (3n-4)/8$ contains a directed Hamilton cycle.
\end{theorem}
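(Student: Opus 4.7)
The plan is to follow the standard \emph{stability plus extremal case} paradigm that has proved effective for pushing approximate Hamiltonicity results to exact ones in the directed/oriented setting. Fix a small constant $\alpha>0$, much smaller than any constant arising from the approximate version due to Kelly, K\"uhn and Osthus. Say that $G$ is \emph{extremal} if, after deleting a small set of atypical vertices, $V(G)$ admits a partition (or near-partition) into classes that closely mimics the construction of H\"aggkvist showing $(3n-4)/8$ is tight; otherwise $G$ is \emph{non-extremal}. The two cases are then handled by entirely different methods, and the exact bound $(3n-4)/8$ enters only at the very end of the extremal analysis.

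For the non-extremal case, I would apply a digraph version of Szemer\'edi's regularity lemma to obtain a reduced oriented multidigraph $R$ in which most clusters still have semi-degree close to $3|R|/8$. Since $G$ is far from every extremal configuration, one can argue that $R$ has stronger pseudorandom/expansion properties than merely the semi-degree bound, and in particular contains a ``Hamilton framework'' whose edges may each be replaced by a directed path in $G$ using the blow-up lemma. In parallel I would build, by the absorbing method of R\"odl--Ruci\'nski--Szemer\'edi adapted to oriented graphs, a short absorbing path $P_{\mathrm{abs}}$ such that any small leftover vertex set can be inserted into $P_{\mathrm{abs}}$. Combining $P_{\mathrm{abs}}$, an almost-spanning collection of long paths obtained from the regularity structure, and a final closing step then gives a directed Hamilton cycle.

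For the extremal case, the structural information is much stronger: up to a linear error there is an explicit partition $V(G)=V_1\cup\dots\cup V_k$ (matching H\"aggkvist's extremal graph, where the parts play specific roles with respect to in- and out-neighbourhoods) and nearly all edges respect this partition. I would first clean up the partition, reassigning the $o(n)$ exceptional vertices to the parts where they most closely fit, and classify them by which ``defects'' they cause. Then I would build a Hamilton cycle by prescribing its interaction with each $V_i$ combinatorially: between consecutive blocks a near-perfect matching in an auxiliary bipartite graph, and within each block a spanning path obtained from a Hamilton connection between designated endpoints. The exceptional vertices are woven in one at a time, each consuming a small amount of the semi-degree surplus. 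Tracking these costs carefully will show that the additive slack of $4/8$ in $(3n-4)/8$ versus the barrier $3n/8$ is exactly what is needed.

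The main obstacle, as is typical for such exact results, will be the extremal case, and specifically verifying that every exceptional vertex can be accommodated. Some exceptional vertices will have very skewed in/out-neighbourhoods with respect to the partition and are dangerous because each such vertex can force the Hamilton cycle through a specific ``bottleneck'' edge; the delicate combinatorial budgeting must show that these bottlenecks never collide and that the residual bipartite matching problems between the $V_i$ remain solvable. If this accounting succeeds with the tight bound $(3n-4)/8$, together with H\"aggkvist's matching lower bound construction, one obtains Theorem~\ref{thm:ham_exact}.
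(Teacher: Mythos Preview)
The paper does not prove Theorem~\ref{thm:ham_exact}. It is quoted in the introduction as a known result of Keevash, K\"uhn and Osthus, with a citation to~\cite{KKO_exact}, and is thereafter used only as a black box: in Section~\ref{sec:prepG} it is invoked twice (once for $R_B$ and once for $R_A$) to guarantee a directed Hamilton cycle in the reduced oriented graph, and that is its sole role. There is therefore no ``paper's own proof'' to compare your proposal against.

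Your outline is a plausible sketch of how one might attack such an exact result, and the stability-plus-extremal paradigm is indeed what~\cite{KKO_exact} uses. One remark: your non-extremal case invokes the absorbing method, whereas the actual argument in~\cite{KKO_exact} stays within the regularity--blow-up framework (building on the approximate result of~\cite{kelly_kuhn_osthus_hc_orient}) and does not use absorbers. But since the present paper contains no proof of this theorem at all, any detailed assessment of your sketch would have to be made against~\cite{KKO_exact} itself, not against this paper.
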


Christofides, Keevash, K\"uhn and Osthus \cite{CKKO} have also
since found an efficient algorithmic proof of (a generalisation
of) this result.

Nash-Williams~\cite{NashWilliams} conjectured a digraph
analogue of Chv\'atal's theorem. This has recently been
approximately confirmed by K\"uhn, Osthus and Treglown
\cite{KOT_hc}. There also now exists a semi-exact degree
condition result due to Christofides, Keevash, K\"uhn and
Osthus \cite{CKKO_semiexact}.

 It is natural to ask whether these bounds
only give us directed Hamilton cycles or whether they give
every possible orientation of a Hamilton cycle. Indeed this
question was answered for digraphs, asymptotically at least, by
H\"aggkvist and Thomason in 1995.

\begin{theorem}[H\"aggkvist and Thomason \cite{HT_arb}]
There exists~$n_0$ such that every digraph~$D$ on~$n\geq n_0$
vertices with minimum semi-degree $\delta^0(D)\geq n/2+n^{5/6}$
contains every orientation of a Hamilton cycle.
\end{theorem}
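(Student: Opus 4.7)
The plan is to prove this by a randomised embedding argument calibrated so that the $n^{5/6}$ slack in the degree hypothesis absorbs the concentration error of a union bound over $n$ embedding steps. Fix an arbitrary orientation $C$ of the $n$-cycle and describe it by its sequence of edge directions around the cycle; the maximal monotone \emph{runs} of $C$ are delimited by \emph{source} vertices (both incident edges pointing out) and \emph{sink} vertices (both incident edges pointing in), which must alternate around the cycle.

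The heart of the argument is to embed $C$ into $D$ iteratively. Processing the vertices of $C$ in cyclic order starting from a chosen source, at step $i$ draw the image of $v_i$ uniformly at random from the set of as-yet unused vertices of $D$ whose adjacency to the image of $v_{i-1}$ respects the prescribed direction of the edge $v_{i-1}v_i$ (and, when $v_i$ is a transit vertex so that $v_{i+1}$ is also already constrained, its adjacency requirement with the image of $v_{i-2}$ is still only through the single predecessor). The semi-degree hypothesis ensures that the expected size of this candidate set is at least $(n/2 + n^{5/6})(n-i)/n$, which remains much larger than $\sqrt{n \log n}$ until $n-i$ becomes polynomially small. A Doob vertex-exposure martingale together with Azuma's inequality, combined with a union bound over the $n$ steps, then yields concentration of candidate-set sizes about their expectations with high probability, so the embedding survives until almost every vertex is placed.

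To close the cycle, I would set aside a small random \emph{reservoir} $R \subseteq V(D)$ of size roughly $n^{5/6}$ at the outset. Chernoff bounds ensure that, with high probability, every vertex of $D$ has close to $|R|/2$ in- and out-neighbours in $R$, so once the main embedding has placed all but the reservoir and a handful of leftover vertices, the prescribed orientation of the missing arc of $C$ can be realised inside (or through) $R$ by a smaller-scale repetition of the same random scheme, or by a direct greedy closing exploiting the near-Dirac semi-degrees inherited by $D[R]$.

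The main obstacle will be controlling dependencies: the candidate set at step $i$ depends on all previous random choices through the out-neighbourhood constraint imposed by the image of $v_{i-1}$, and the orientation pattern of $C$ may force several source/sink transitions to succeed in rapid succession. The polynomial excess $n^{5/6}$ is tight in that it has to dominate the $\sqrt{n \log n}$ scale of Azuma deviations while still leaving slack for the closing step; verifying that the candidate-set size process remains self-correcting throughout the embedding is the technical heart of the proof.
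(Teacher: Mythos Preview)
The paper does not prove this theorem. It is quoted in the introduction as a known result of H\"aggkvist and Thomason~\cite{HT_arb}, serving only as background to motivate the paper's own Theorem~\ref{thm:arb_ham} on oriented graphs. No proof, sketch, or outline of it appears anywhere in the paper, so there is no ``paper's own proof'' to compare your attempt against.

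If your intention was to reconstruct the H\"aggkvist--Thomason argument, your sketch is in the right spirit: their proof is indeed a randomised sequential embedding in which the $n^{5/6}$ excess in the semi-degree absorbs the concentration error and leaves enough room to close the cycle. Your outline is, however, rather loose in places. The dependency structure of the candidate sets is more delicate than you indicate (the candidate set at step~$i$ depends on \emph{which} vertex was chosen at step~$i-1$, not just on which vertices remain unused), so a naive Doob martingale on the vertex-exposure filtration does not directly control the quantity you want; one needs either to track the candidate sets more carefully or to condition appropriately. The closing step is also genuinely nontrivial: a reservoir of size~$n^{5/6}$ with inherited semi-degree about~$|R|/2$ does not by itself guarantee an arbitrarily oriented Hamilton path with prescribed endpoints inside~$R$, and ``a smaller-scale repetition of the same random scheme'' would require its own slack, which you have not budgeted for. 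These are fixable, but the proposal as written is a plan rather than a proof.
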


The question was asked originally for oriented graphs by
H\"aggkvist and Thomason~\cite{HaggkvistThomasonHamilton} who
proved that for all~$\alpha>0$ and all sufficiently large
oriented graphs~$G$ a minimum semi-degree of~$(5/12+\alpha)|G|$
suffices to give \emph{any} orientation of a Hamilton cycle.
They conjectured that~$(3/8+\alpha)|G|$ suffices, the same
bound as for the directed Hamilton cycle up to the error
term~$\alpha|G|$. Whilst not asked explicitly before
H\"aggkvist and Thomason's paper, there is some previous work
of Thomason and Grant relevant to this area. Grant~\cite{Grant}
proved in 1980 that any digraph~$D$ with minimum
semi-degree~$\delta^0(D)\geq 2|D|/3+\sqrt{|D|\log |D|}$
contains an anti-directed Hamilton cycle, provided that~$n$ is
even. (An anti-directed cycle is one in which the edge
orientations alternate.) Thomason~\cite{Thomason} showed in
1986 that every sufficiently large tournament contains every
possible orientation of a Hamilton cycle (except possibly the
directed Hamilton cycle if the tournament is not strong). The
following theorem confirms the conjecture of H\"aggkvist and
Thomason.

\begin{theorem}\label{thm:arb_ham}
For every~$\alpha>0$ there exists an integer $n_0=n_0(\alpha)$
such that every oriented graph~$G$ on~$n\geq n_0$ vertices with
minimum semi-degree~$\delta^0(G)\geq (3/8+\alpha)n$ contains
every orientation of a Hamilton cycle.
\end{theorem}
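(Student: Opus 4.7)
The plan is to combine the Diregularity lemma with a randomised embedding procedure and an absorbing argument, treating orientations with few direction changes and those with many direction changes uniformly via one randomised allocation. First I would apply the Diregularity lemma to $G$ to obtain an $\epsilon$-regular partition with exceptional set $V_0$ and reduced digraph $R$ of minimum semi-degree at least $(3/8+\alpha/2)|R|$; since $G$ is oriented, $R$ is essentially oriented, so Theorem~\ref{thm:ham_exact} applied to $R$ yields a directed Hamilton cycle $C_R$ along which, after standard modifications, consecutive clusters form super-regular pairs.

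Write the target orientation as $C = Q_1 Q_2 \cdots Q_\ell$, where each $Q_i$ is a maximal directed sub-path (a \emph{run}) of $C$. When $\ell$ is small, $C$ is structurally close to the directed Hamilton cycle and can be embedded along $C_R$ with only a bounded number of controlled excursions at its direction changes. When $\ell$ is linear in $n$, at the extreme the anti-directed case, embedding along $C_R$ alone fails: consecutive edges of $C$ would require edges in opposite directions between the same pair of clusters, which an oriented graph cannot provide. To handle this uniformly, I would design a randomised procedure that traverses $C$ and, at each direction change, inserts a short detour through an off-cycle cluster of $R$ which has both in- and out-neighbours in the current and next clusters of $C_R$. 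The slack $\delta^0(R) - |R|/2 \geq (\alpha/2 - o(1))|R|$ guarantees the existence of many such detour clusters, and randomisation ensures that the resulting cluster loads concentrate around their means. A Chernoff-type bound would then give that, with positive probability, each cluster receives the correct number of vertices of $C$ together with the correct split between ``forward'' and ``backward'' transitions, so that the directed blow-up lemma can realise the embedding inside the super-regular segments.

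To finish I would reserve, before the random embedding, a short absorbing structure $A \subseteq G$ together with a rigid segment of $C$ that can be padded by any small leftover set $L \subseteq V(G)\setminus V(A)$ while preserving the required orientations at the endpoints of $A$. Construction of such absorbers using the $(3/8+\alpha)n$ semi-degree condition is by now standard in the directed Hamilton cycle literature, and the oriented version should not present essentially new difficulties once one has chosen a template for the segment of $C$ lying on $A$. Embedding the bulk of $C$ via the randomised procedure into $G\setminus V(A)$ and then inserting $L$ through the absorber closes the cycle with the prescribed orientation.

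The main obstacle is the highly alternating regime. Orchestrating the detours at direction changes so that the random loads concentrate sharply enough for the blow-up lemma to apply, while simultaneously matching the precise orientation pattern of $C$ at every cluster transition (and remaining compatible with the reserved absorber), is the delicate heart of the argument; this is precisely where the slack in $\delta^0(G) \geq (3/8+\alpha)n$ beyond what is required to force a single directed Hamilton cycle is consumed.
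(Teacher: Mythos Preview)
Your outline shares the right large-scale structure with the paper --- Diregularity lemma, a directed Hamilton cycle $C_R$ in the reduced oriented graph, a randomised allocation, and the Blow-up lemma --- but the core mechanism you propose for handling direction changes is different from the paper's and contains a concrete error.

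First, the numerical slip: you write ``the slack $\delta^0(R) - |R|/2 \geq (\alpha/2 - o(1))|R|$''. But $\delta^0(R)\approx(3/8+\alpha/2)|R|$, so $\delta^0(R)-|R|/2\approx(\alpha/2-1/8)|R|$, which is negative for small $\alpha$. There is no slack above $|R|/2$; the whole point of the $3/8$ threshold is that it lies strictly below $1/2$. So the existence argument you sketch for detour clusters, as stated, does not go through.

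Second, and more structurally, the paper does \emph{not} insert detours at direction changes. Instead it simply embeds each long segment $P_i$ of $C$ greedily around $C_R$: if the next edge of $C$ points forward it moves from $V_j$ to $V_{j+1}$ on $C_R$, and if it points backward it moves from $V_j$ to $V_{j-1}$. Direction changes are thus handled for free by walking back and forth along $C_R$; no off-cycle cluster is needed at each change. The randomisation is only over the \emph{starting cluster} of each of roughly $(\log n)^2$ segments, and a bounded-differences inequality (each segment can shift any cluster's load by at most its length) gives the balanced allocation.

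Third, the paper does not use an absorber. It obtains an \emph{exactly} balanced assignment by a case split: if $C$ has linearly many neutral pairs (places where two consecutive edges point into the same vertex), one rebalances by rerouting neutral pairs along short \emph{skewed traverses} in $R$; if $C$ has few neutral pairs, then $C$ contains many long consistently oriented runs, and one rebalances by replacing such runs with \emph{shifted walks} that wind around $C_R$. In both cases the expansion given by Lemma~\ref{lemma:expansion} (which uses only $\delta^0(R)\geq(3/8+\alpha)|R|$, not anything above $|R|/2$) supplies the needed short structures.

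Finally, there is a technical device you are missing: a single application of the Diregularity lemma leaves an exceptional set of size $\epsilon n$, which is too large for the balancing step. The paper first randomly halves $V(G)$, applies the lemma to one half with parameter $\epsilon$, then applies it again to the other half together with the first exceptional set with a much smaller parameter $\epsilon_A\ll 1/M_B$, so that the final exceptional set is tiny relative to the cluster sizes it must be absorbed into.
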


\subsection{Robust Expansion}

The property underlying the proofs of all the recent Hamilton
cycle results so far stated is \emph{robust expansion}. This is
a notion which was introduced by K\"uhn, Osthus and Treglown in
\cite{KOT_hc} and has proved to be the correct notion of
expansion in a digraph when dealing with this kind of question
or when using the Diregularity lemma. Informally speaking, a
digraph~$G$ is a robust outexpander if all subsets of~$V(G)$
have outneighbourhoods larger than themselves unless they are
very large or very small and, moreover, this still holds after
the removal of a small number of edges.

Having a minimum semi-degree~$\delta^0(G)\geq (3/8+\alpha)|G|$
for some~$\alpha>0$, satisfying an approximate Ore-type
condition or satisfying an approximate Chv\'atal condition
imply robust outexpansion (see Lemma~11 in~\cite{KOT_hc}).
Hence an extension of Theorem~\ref{thm:arb_ham} to robust
outexpanders would imply an approximate Ore-type result and a
Chv\'atal-type approximate result for arbitrary orientations of
Hamilton cycles. The author believes it is likely that the
argument given in this paper could be straight-forwardly
extended to prove this.

\subsection{Extremal Example}
H\"aggkvist~\cite{HaggkvistHamilton} constructed an example in
1993 giving a graph on~$n=8k-1$ vertices with minimum
semi-degree~$(3n-5)/8$ containing no Hamilton cycle and
Keevash, K\"uhn and Osthus extended this to all~$n$. This means
that Theorem~\ref{thm:ham_exact} is best possible and that
Theorem~\ref{thm:arb_ham} is best possible up to the linear
error term. Interestingly, this example can be improved upon
when considering arbitrary orientations. Hence the additive
constant in Theorem~\ref{thm:ham_exact} is not the correct
bound when seeking any orientation of a Hamilton cycle, and it
is an open question as to what the correct additional term
should be.

\begin{figure}
  \centering\footnotesize
  \centering\footnotesize
  \psfrag{1}[][]{$A$}
  \psfrag{2}[][]{$B$}
  \psfrag{3}[][]{$C$}
  \psfrag{4}[][]{$D$}
  \includegraphics[scale=1]{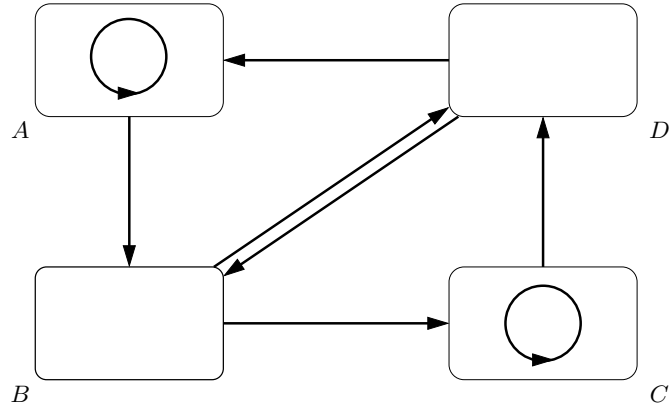}
  \caption{The oriented graph constructed in Proposition~\ref{prop:extremal}}\label{fig:extremal}
\end{figure}

\begin{proposition}\label{prop:extremal}
There are infinitely many oriented graphs~$G$ with minimum
semi-degree exactly~$(3|G|-4)/8$ which do not contain an
anti-directed Hamilton cycle.
\end{proposition}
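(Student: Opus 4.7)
I would exhibit, for each $n$ in a suitable infinite arithmetic progression (for instance $n\equiv 4 \pmod{8}$, so that $(3n-4)/8$ is an integer), the oriented graph $G$ depicted in Figure~\ref{fig:extremal}, and verify the two required properties. The graph partitions $V(G)$ into four parts $A, B, C, D$ with the inter-part orientations shown in the figure and a regular tournament inside each part (which exists because the prescribed part sizes are odd). Checking that $\delta^0(G)=(3n-4)/8$ then reduces to a routine part-by-part count of each vertex's in- and out-neighbours: each inter-part contribution is read off from the figure, and each intra-part contribution is $(|X|-1)/2$.

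The substantive step is to show that $G$ contains no anti-directed Hamilton cycle. Suppose for contradiction that $\mathcal{C}$ is one, and let $S \sqcup T$ be its source/sink bipartition, so $|S|=|T|=n/2$ and every edge of $\mathcal{C}$ is oriented from $S$ to $T$ in $G$. Writing $s_X:=|S\cap X|$, $t_X:=|T\cap X|$, $e_X$ for the number of cycle edges inside part $X$, and $f_{XY}$ for the number of cycle edges from $X$ to $Y$, the $2$-regularity of $\mathcal{C}$ at every vertex (two cycle out-edges at each source and two cycle in-edges at each sink) yields standard flow equations relating these quantities, while the tournament inside each part upper-bounds $e_X$ in terms of $(s_X,t_X)$. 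A case analysis on the profiles $(s_X,t_X)$ will narrow the possibilities drastically: most are forced to be infeasible because $f_{XY}$ would exceed the maximum $s_X t_Y$, or because an $e_X$ would exceed the number of source-to-sink edges available in the tournament. In each surviving profile one checks that the cycle edges decompose into two independent $2$-regular bipartite-like subgraphs supported on disjoint subsets of $V(G)$, forcing $\mathcal{C}$ to be a disjoint union of at least two shorter cycles rather than a single Hamilton cycle, which is the desired contradiction.

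The hard part is the final disconnection step: the specific inter-part edge pattern in Figure~\ref{fig:extremal} is engineered precisely so that, for every surviving source/sink profile, sources in certain parts can only emit cycle edges into one ``side'' of the graph and the sinks on the other side can only receive them from that same side, so $\mathcal{C}$ cannot connect. This delicacy is genuine: a generic four-part oriented graph (for example the naive cyclic $A\to B\to C\to D\to A$ with tournaments) has the correct semi-degree but does contain anti-directed Hamilton cycles, so the key technical content is to verify that the decomposition really does occur for every surviving $(s_X,t_X)$ in this particular construction.
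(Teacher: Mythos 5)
Your construction is not the one the paper uses, and the gap is consequential. In the paper only $A$ and $C$ carry regular tournaments; $B$ and $D$ have \emph{no} internal edges at all, and are instead joined by an (approximately regular) bipartite tournament between $B$ and $D$. If you additionally place a tournament inside $B$ and $D$ while keeping the $B$--$D$ edges, vertices of $B$ have out-degree about $4m$ rather than $3m+1$; if you place tournaments inside $B$ and $D$ and drop the $B$--$D$ edges, you recover exactly the ``naive cyclic'' graph which you yourself observe does admit anti-directed Hamilton cycles. So the feature you must not lose is precisely the emptiness of $B$ and $D$ together with the $B$--$D$ bipartite tournament.

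With the correct construction, the impossibility step is far simpler than a profile-by-profile analysis with flow equations, and the obstruction is not a decomposition of $\mathcal{C}$ into two subcycles but outright confinement. Pick any $b\in B$ and note $N^+(b)\subseteq C\cup D$ and $N^-(b)\subseteq A\cup D$. If $b$ is a source of $\mathcal{C}$, both its cycle-neighbours are sinks lying in $C\cup D$; every sink in $C\cup D$ has all in-neighbours in $B\cup C$, and every source in $B\cup C$ has all out-neighbours in $C\cup D$, so tracing round $\mathcal{C}$ shows every vertex of $\mathcal{C}$ lies in $B\cup C\cup D$ and $A$ is missed entirely. If instead $b$ is a sink, the symmetric argument (or reversing every edge, which sends $G$ to an isomorphic copy with $A$ and $C$ exchanged) traps $\mathcal{C}$ in $A\cup B\cup D$ and $C$ is missed. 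Either way $\mathcal{C}$ omits a whole part of size $n/4$, so it is not Hamiltonian (indeed no anti-directed cycle has length exceeding $3n/4$). This is exactly the paper's argument, phrased as a two-step-at-a-time reachability trace; none of the balance conditions on $(s_X,t_X)$ are needed, and your flow-equation framework, even if pushed through, would rediscover this confinement rather than the two-component decomposition you anticipate. As written, the ``hard part'' of your plan is both the wrong target and left entirely open.
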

\begin{proof}
Let $n:=8m+4$ for some integer $m\in\N$. Let~$G$ be the
oriented graph obtained from the disjoint union of two regular
tournaments~$A$ and~$C$ on~$2m+1$ vertices and sets~$B$ and~$D$
of~$2m+1$ vertices by adding all edges from~$A$ to~$B$, all
edges from~$B$ to~$C$, all edges from~$C$ to~$D$ and all edges
from~$D$ to~$A$. Finally, between~$B$ and~$D$ we add edges to
obtain a bipartite tournament which is as regular as possible,
i.e.~the indegree and the outdegree of every vertex differs by
at most~1. So in particular every vertex in~$B$ sends at least
$m$ edges to~$D$. It is easy to check that the minimum
semi-degree of~$G$ is $3m+1=(3n-4)/8$, as required.

Let us try to construct an anti-directed Hamilton cycle in~$G$
and let us start in~$B$ with an edge going forwards. This edge
can go either to~$C$ or to~$D$. (Starting with an edge oriented
backwards produces an identical argument and result.) The next
edge must go backwards. It can go from~$C$ to either~$B$
or~$C$. It can go from~$D$ to either~$B$ or~$C$. So after two
steps we can be in either~$B$ or $C$. Our next edge must go
forwards. If we are in~$B$ our possible locations after the
next two steps are~$B$ and~$C$ as before. From~$C$ we can go
forwards either to~$C$ or to~$D$. Both options repeat
situations we have already met. In no case do we have a means
to reach~$A$ whilst respecting the orientation of our
anti-directed Hamilton cycle. Hence the longest anti-directed
cycle in~$G$ has length at most~$3n/4$ and we have no
anti-directed Hamilton cycle as claimed.
\end{proof}

\subsection{Pancyclicity}

Recently the author, together with K\"uhn and
Osthus,~\cite{KKO_pan} showed that the minimum semi-degree
condition in Theorem~\ref{thm:ham_exact} gives not only a
Hamilton cycle but a cycle of every possible length. It is
natural to ask whether this can be extended to give all
orientations of all cycles of all possible lengths. A simple
probabilistic argument implies that Theorem~\ref{thm:arb_ham}
gives arbitrary orientations of any cycle of linear length (\ie
for all~$\alpha>0$, $\eta>0$ every sufficiently large oriented
graph~$G$ with~$\delta^0(G)\geq (3/8+\alpha)|G|$ contains every
orientation of any cycle of length at least~$\eta|G|$). It
remains an open question as to whether the error term can be
removed. The results on short cycles necessary to prove the
exact pancyclicity result in~\cite{KKO_pan} can (with the
addition of an error term in the minimum semi-degree condition)
also be extended to arbitrary orientations of cycles. In
particular, the following theorem can be obtained.

\begin{theorem}
Let $\alpha>0$. Then there exists~$n_0=n_0(\alpha)$ such that
if~$G$ is an oriented graph on~$n\geq n_0$ vertices with
minimum semi-degree~$\delta^0(G)\geq (3/8+\alpha)n$ then~$G$
contains a cycle of every possible orientation and of every
possible length.
\end{theorem}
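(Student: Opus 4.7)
The plan is to separate the argument according to cycle length. Fix $\alpha>0$, choose $\eta=\eta(\alpha)>0$ sufficiently small, and handle cycles of length $\ell \geq \eta n$ (``long'' cycles) by reduction to Theorem~\ref{thm:arb_ham}, while cycles of length $\ell < \eta n$ (``short'' cycles) are handled by adapting the short-cycle machinery developed for the exact pancyclicity result in~\cite{KKO_pan}.

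For the long case, fix any orientation $\vec{C}_\ell$ of a cycle of length $\ell \geq \eta n$. Choose a uniformly random subset $S \subseteq V(G)$ with $|S|=\ell$. For each $v\in S$, the random variable $|N^+(v)\cap S|$ is a hypergeometric with mean at least $(3/8+\alpha)\ell$, so a Chernoff-type concentration bound together with a union bound over the $n$ vertices gives that, \whp, $\delta^0(G[S]) \geq (3/8+\alpha/2)\ell$ (here we use $\ell \geq \eta n$ so that the relative error $O(\sqrt{(\log n)/\ell})$ is much smaller than $\alpha/2$). Applying Theorem~\ref{thm:arb_ham} with parameter $\alpha/2$ to $G[S]$, which is possible since $\ell \geq \eta n \geq n_0(\alpha/2)$ for $n$ large, yields a copy of $\vec{C}_\ell$ inside $G[S] \subseteq G$.

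For the short case, we follow the structure of~\cite{KKO_pan}, which classifies short cycle lengths into several regimes and constructs each via a different ad~hoc argument based on the minimum semi-degree condition. Each such argument starts from a few carefully chosen edges/paths found by greedy extension (exploiting the robust-outexpansion properties implied by the semi-degree condition) and then closes them up. For arbitrary orientations one first builds a short oriented path $P$ realising the desired orientation pattern of length $\ell-O(1)$, block by block, using that every vertex has $(3/8+\alpha)n$ in- and out-neighbours so forward and backward edges can be appended greedily while avoiding a small forbidden set; one then closes $P$ into a cycle using a short connecting oriented path of the prescribed type between its endpoints, which again exists by the robust expansion of $G$. The $\alpha n$ slack in the semi-degree condition absorbs the loss from both the greedy construction and from the avoidance of previously used vertices, which is where the error term in the hypothesis enters (in contrast to the exact setting of~\cite{KKO_pan}).

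The main obstacle is the short-cycle case: the probabilistic reduction fails when $\ell = o(n)$ because the hypergeometric fluctuations become comparable to $\alpha\ell$, and one genuinely has to produce oriented cycles of small length. The most delicate sub-case is when the orientation has long blocks of consecutive edges in the same direction (so the cycle behaves almost like a short directed cycle) combined with rapid alternations elsewhere, as in anti-directed segments: finding such a pattern of an exact prescribed small length requires careful case analysis paralleling the one in~\cite{KKO_pan}, now parameterised by the orientation pattern. Once this case analysis is carried out, the theorem follows by combining the two regimes.
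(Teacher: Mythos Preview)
Your proposal matches the paper's own treatment essentially exactly. The paper does not give a self-contained proof of this theorem: it states that ``a simple probabilistic argument implies that Theorem~\ref{thm:arb_ham} gives arbitrary orientations of any cycle of linear length'' (this is precisely your random-subset reduction for $\ell\ge\eta n$), and then defers the short-cycle regime entirely to~\cite{KKO_pan}, remarking that the short-cycle arguments there ``can (with the addition of an error term in the minimum semi-degree condition) also be extended to arbitrary orientations of cycles.'' Your sketch of the short-cycle case is at the same level of detail as the paper's discussion, and correctly identifies where the actual work lies.
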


A proof of Kelly, K\"uhn and Osthus of this result and
extensions of the stronger results on some short cycles can be
found in~\cite{KKO_pan}, along with a number of related open
problems.

\subsection{Overview of the Proof}
The proof of Theorem~\ref{thm:arb_ham} splits into
two parts, both relying on the expansion properties that our
minimum semi-degree condition implies. The cases are
distinguished by the similarity of the Hamilton cycle~$C$ we
are trying to embed to the standard orientation of a Hamilton
cycle. It turns out that the correct measure, at least for this
problem, of whether a cycle is close to a directed cycle is the
number of pairs of consecutive edges with different
orientations. Given an oriented graph~$C$ we call the subgraph
induced by three vertices~$x,y,z\in V(C)$ a \emph{neutral pair}
if~$xy,zy\in E(C)$. Given an arbitrarily oriented cycle~$C$
on~$n$ vertices let~$n(C)$ be the number of neutral pairs
in~$C$. Write~$C^*_n$ for the standard orientation of a cycle
on~$n$ vertices. When there is no ambiguity we will merely
write~$C^*$.

The essential idea is to split the cycle up into alternating
short and long paths and use the probabilistic method to find
an approximate embedding of the long paths into a Hamilton
cycle of the reduced graph created by applying a Regularity
lemma for digraphs. We connect these paths up greedily using
the short paths and then adjust the embedding to obtain
something which, after the Blow-up lemma has been applied,
gives us the desired orientation of a Hamilton cycle in our
graph.

The case distinction comes in the manner in which we alter our
embedding. In Section~\ref{sec:far} we give the argument for
cycles far from~$C^*$, where we use the neutral pairs for our
adjustments. In Section~\ref{sec:close} we assume that we have
few neutral pairs, and thus many long sections of~$C$
containing no changes in direction, and use these to adjust our
embedding.

The number of exceptional vertices that the Diregularity lemma
produces when applied directly is too great for the method used
here and hence some technical difficulties are introduced. So
we control the number of exceptional vertices by randomly
splitting our oriented graph~$G$. In still vague, but slightly
more precise terms, the Diregularity lemma will for
any~$\epsilon>0$ give us a partition with the property
of~$\epsilon$-regularity. It will also give us a set of
`exceptional vertices' which are in some sense badly behaved,
but tells us that these make up at most an~$\epsilon$
proportion of our vertices. Our method can only cope with~$\eta
n\ll\epsilon n$ such vertices. Hence we split the vertices of
our given graph~$G$ into two sets~$A$ and~$B$ of roughly equal
size (satisfying some `nice' properties). We apply the
Regularity lemma to~$G[B]$, giving us at most~$\epsilon |G|$
exceptional vertices~$V_0$. We then apply the Diregularity
lemma to~$G[A\cup V_0]$ only this time not with
parameter~$\epsilon$ but with~$\eta$. This gives us at
most~$\eta|G|$ exceptional vertices~$V_0'$. We then
consider~$G_B:=G[(B\sm V_0)\cup V_0']$, which
is~$\epsilon$-regular and has no exceptional vertices
and~$G_A:=G-G_B$, which is~$\eta$-regular and has~$0\ll
\eta|G_A|$ exceptional vertices. Hence, at the cost of some
technical work and having to stitch everything back together we
will be able to control the number of exceptional vertices.

The next section contains much of the notation we use in this
paper. In Section~\ref{sec:regularity_lemma} we introduce the
forms of the Diregularity lemma and Blow-up lemma that we need
later. In Section~\ref{sec:prep} we prepare the oriented
graph~$G$ and the cycle~$C$ for our approximate embedding and
in Section~\ref{sec:approx} prove the main tool needed to do
this. Following that in Section~\ref{sec:prep} we split into
our two cases and in Section~\ref{sec:far} ($C$ is far
from~$C^*$) and Section~\ref{sec:close} ($C$ is close to~$C^*$)
we prove Theorem~\ref{thm:arb_ham}.

\section{Notation}\label{sec:notation}

Given two vertices~$x$ and~$y$ of a digraph~$G$, we write~$xy$
for the edge directed from~$x$ to~$y$. The \emph{order}~$|G|$
of~$G$ is the number of its vertices. We write~$N^+_G(x)$ for
the outneighbourhood of a vertex~$x$ and $d^+(x):=|N^+_G(x)|$
for its outdegree. Similarly, we write~$N^-_G(x)$ for the
inneighbourhood of~$x$ and $d^-(x):=|N^-_G(x)|$ for its
indegree. Given $X\subseteq V(G)$ we denote $|N^+_G(x)\cap X|$
by $d^+_X(x)$, and define $d^-_X(x)$ similarly. We write
$N_G(x):=N^+_G(x)\cup N^-_G(x)$ for the neighbourhood of~$x$.
We use~$N^+(x)$ etc.~whenever this is unambiguous. We
write~$\Delta(G)$ for the maximum of~$|N(x)|$ over all
vertices~$x\in V(G)$. Given a set~$A$ of vertices of~$G$, we
write $N^+_G(A)$ for the set of all outneighbours of vertices
in~$A$. So~$N^+_G(A)$ is the union of $N^+_G(a)$ over all $a\in
A$. $N^-_G(A)$ is defined similarly. The directed subgraph
of~$G$ induced by~$A$ is denoted by~$G[A]$ and we write~$e(A)$
for the number of its edges. $G-A$ denotes the digraph obtained
from~$G$ by deleting~$A$ and all edges incident to~$A$.

Given two vertices $x,y$ of a digraph~$G$, an \emph{$x$-$y$
path} is a path with any orientation which joins~$x$ to~$y$. We
call a path with the standard  orientation a \emph{directed
path}. Given two subsets~$A$ and~$B$ of vertices of~$G$,
an~$A$-$B$ edge is an edge~$ab$ where $a\in A$ and $b\in B$. We
write $e(A,B)$ for the number of all these edges. A \emph{walk}
in~$G$ is a sequence $v_1v_2\dots v_{\ell}$ of (not necessarily
distinct) vertices, where~$v_iv_{i+1}$ or~$v_{i+1}v_i$ is an
edge for all $1\leq i<\ell$. The length of a walk~$W$
is~$\ell(W):=\ell-1$. The walk is \emph{closed} if
$v_1=v_\ell$. Given two vertices $x,y$ of~$G$, the
\emph{distance $dist(x,y)$ from~$x$ to~$y$} is the length of
the shortest directed $x$-$y$ path. The \emph{diameter} of $G$
is the maximum distance between any ordered pair of vertices.

We write~$[k]$ for the set~$\{1,2,\ldots,k\}$. We write $0<a_1
\ll a_2 \ll \ldots \ll a_k$ to mean that we can choose the
constants $a_1,a_2,\ldots,a_k$ from right to left. More
precisely, there are increasing functions
$f_1,f_2,\ldots,f_{k-1}$ such that, given $a_k$, whenever we
choose some $a_i \leq f_i(a_{i+1})$, all calculations needed
using these constants are valid.

\section{The Diregularity lemma and the Blow-up lemma}\label{sec:regularity_lemma}

In this section we collect all the information we need about
the Diregularity lemma and the Blow-up lemma. See~\cite{KSi}
for a survey on the Regularity lemma and~\cite{JKblowup} for a
survey on the Blow-up lemma. We start with some more notation.
The density of an undirected bipartite graph $G = (A,B)$ with
vertex classes~$A$ and~$B$ is defined to be
\[d_G(A,B) := \frac{e_G(A,B)}{|A||B|}.\]
We often write $d(A,B)$ if this is unambiguous. Given
$\epsilon>0$, we say that~$G$ is \emph{$\epsilon$-regular} if
for all subsets $X\subseteq A$ and $Y\subseteq B$ with
$|X|>\epsilon|A|$ and $|Y|>\epsilon|B|$ we have that
$|d(X,Y)-d(A,B)|<\epsilon$. Given $d\in [0,1]$ we say that~$G$
is $(\epsilon,d)$-\emph{super-regular} if it is $\eps$-regular
and furthermore $d_B(a)\ge (d-\eps) |B|$ for all $a\in A$ and
$d_A(b)\ge (d-\eps)|A|$ for all $b\in B$. (This is a slight
variation of the standard definition of
$(\epsilon,d)$-super-regularity where one requires $d_B(a)\ge d
|B|$ and $d_A(b)\ge d|A|$.)

The Diregularity lemma is a version of the Regularity lemma for
digraphs due to Alon and
Shapira~\cite{AlonShapiraTestingDigraphs}. Its proof is quite
similar to the undirected version. We will use the degree form
of the Diregularity lemma which can be easily derived (see
e.g.~\cite{young_05_extremal}) from the standard version, in
exactly the same manner as the undirected degree form.
\begin{lemma}[Degree form of the Diregularity lemma]\label{lemma:diregularity_lemma}
For every $\epsilon\in (0,1)$ and every integer~$M'$ there are
integers~$M$ and~$n_0$ such that if~$G$ is a digraph on $n\ge
n_0$ vertices and $d\in[0,1]$ is any real number, then there is
a partition of the vertices of~$G$ into $V_0,V_1,\ldots,V_k$
and a spanning subdigraph~$G'$ of~$G$  such that the following
holds:
\begin{itemize}
\item $M'\le k\leq M$,
\item $|V_0|\leq \epsilon n$,
\item $|V_1|=\cdots=|V_k|=:m$,
\item $d^+_{G'}(x)>d^+_G(x)-(d+\epsilon)n$ for all vertices $x\in G$,
\item $d^-_{G'}(x)>d^-_G(x)-(d+\epsilon)n$ for all vertices $x\in G$,
\item for every ordered pair~$V_iV_j$ with $1\le i,j\le k$
    and $i\neq j$ the bipartite graph $(V_i,V_j)_{G'}$
    whose vertex classes are~$V_i$ and~$V_j$ and whose edge
    set consists of all the $V_i$-$V_j$ edges in~$G'$ is
    $\epsilon$-regular and has density either~0 or at
    least~$d$,
\item for all~$1\leq i\leq k$ the digraph~$G'[V_i]$ is
    empty.
\end{itemize}
\end{lemma}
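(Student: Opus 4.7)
The plan is to derive the degree form directly from the standard Diregularity lemma of Alon and Shapira by a routine edge-deletion argument, in complete analogy with the undirected degree form. I would start by applying the standard lemma with a suitably small regularity parameter $\epsilon'$ (of order $\epsilon^2$) and with the lower bound on the number of parts replaced by some $M''$ chosen so that $1/M'' \ll \epsilon$. This produces a partition $V_0', V_1, \ldots, V_{k_0}$ of $V(G)$ satisfying $|V_0'|\le\epsilon' n$, $|V_1|=\cdots=|V_{k_0}|=:m$, with all but at most $\epsilon' k_0^2$ of the ordered pairs $(V_i, V_j)$ (for $i, j \geq 1$, $i \neq j$) being $\epsilon'$-regular.

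Next I would identify and absorb into $V_0'$ any ``anomalous'' cluster $V_i$ participating in more than $\sqrt{\epsilon'}k_0$ irregular pairs on either side; by the global bound $\epsilon' k_0^2$ there are at most $2\sqrt{\epsilon'}k_0$ such clusters, so this enlarges the exceptional set by at most $2\sqrt{\epsilon'}n$. Call the resulting partition $V_0, V_1, \ldots, V_k$, where $k \geq M'$ provided $M''$ was chosen large enough. I would then construct the spanning subdigraph $G'$ by removing three classes of edges from $G$: (i) every edge with both endpoints in the same cluster $V_i$ for $i \geq 1$; (ii) every edge inside an ordered pair $(V_i, V_j)$ (with $i, j \geq 1$, $i \neq j$) that fails to be $\epsilon'$-regular; and (iii) every edge inside an $\epsilon'$-regular pair whose density is strictly less than $d$. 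Edges touching $V_0$ are left in place. By construction $G'[V_i]$ is empty for each $1 \leq i \leq k$, and each surviving pair $(V_i, V_j)_{G'}$ is either empty or $\epsilon'$-regular (hence $\epsilon$-regular) with density at least $d$.

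It remains to verify the degree conditions. For any vertex $x$ of $G$, type (i) deletes at most $m \leq n/M''$ edges incident to $x$; type (iii) deletes at most $d\cdot k \cdot m \leq dn$, since $x$ has at most $dm$ outneighbours in any low-density pair; and type (ii) deletes at most $\sqrt{\epsilon'}k_0 \cdot m \leq \sqrt{\epsilon'}n$, because after removal of the anomalous clusters each remaining $V_i$ lies in at most $\sqrt{\epsilon'}k_0$ irregular pairs (and vertices now in $V_0$ lose no edges of types (ii) or (iii) at all). Summing these three contributions and choosing $\epsilon'$ of order $\epsilon^2$ together with $M''$ large gives the required bound $d^+_{G'}(x) > d^+_G(x) - (d+\epsilon)n$, and the indegree bound is identical. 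The argument is essentially bookkeeping; the only mild subtlety is the per-vertex control of type (ii) deletions via the averaging step above, and I anticipate no genuine obstacle.
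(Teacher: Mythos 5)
Your overall strategy --- applying the standard Alon--Shapira Diregularity lemma with a refined parameter $\epsilon'$ of order $\epsilon^2$, absorbing anomalous clusters into the exceptional set, and then building $G'$ by deleting edges inside clusters, inside irregular pairs, and inside low-density pairs --- is exactly the route the paper has in mind when it says the degree form ``can be easily derived \ldots from the standard version, in exactly the same manner as the undirected degree form''. Your handling of type (i) and type (ii) deletions is correct, including the averaging argument that bounds the number of anomalous clusters by $2\sqrt{\epsilon'}k_0$.

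There is, however, a genuine gap in your bound for type (iii) deletions. You assert that ``$x$ has at most $dm$ outneighbours in any low-density pair''. This is false: the pair $(V_i,V_j)$ having density below $d$ only bounds the \emph{average} outdegree into $V_j$ over $x\in V_i$; a single vertex can have up to $m$ outneighbours in $V_j$, and nothing prevents one vertex from doing so in every low-density pair it meets, making its type (iii) loss close to $n$ rather than $dn$. What $\epsilon'$-regularity actually gives you is a per-pair exceptional-vertex bound: for each $\epsilon'$-regular pair $(V_i,V_j)$ of density $d_{ij}<d$, at most $\epsilon' m$ vertices of $V_i$ have outdegree into $V_j$ exceeding $(d_{ij}+\epsilon')m < (d+\epsilon')m$ (take $X$ to be the set of offending vertices and $Y=V_j$ in the definition of regularity). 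You therefore need a second bad-vertex absorption step analogous to your anomalous-cluster step: double-count the (vertex, pair) incidences where a vertex exceeds this threshold --- there are at most $\epsilon' m k^2$ of them for outgoing pairs and the same for incoming pairs --- and move into $V_0$ every vertex that is over-threshold in more than $\sqrt{\epsilon'}k$ pairs; this adds at most $2\sqrt{\epsilon'}n$ to the exceptional set. For every surviving $x\in V_i$ the type (iii) out-loss is then at most $\sqrt{\epsilon'}k\cdot m + k(d+\epsilon')m \le (d+O(\sqrt{\epsilon'}))n$, which yields the required $(d+\epsilon)n$ bound once $\epsilon'$ is a sufficiently small multiple of $\epsilon^2$. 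Finally, this per-vertex pruning leaves the clusters of unequal sizes, so you must also truncate each $V_i$ to the minimum remaining size and dump the excess into $V_0$ to restore $|V_1|=\cdots=|V_k|$; this is routine but should be stated.
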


$V_1,\ldots,V_k$ are called \emph{clusters}, $V_0$ is called
the \emph{exceptional set} and the vertices in~$V_0$ are called
\emph{exceptional vertices}. Note that in~$G'$ all pairs of
clusters are $\epsilon$-regular in both directions (but
possibly with different densities). We call the spanning
digraph~$G'\subseteq G$ given by the Diregularity lemma the
\emph{pure digraph}. Given clusters $V_1,\ldots,V_k$ and the
pure digraph~$G'$, the \emph{reduced digraph~$R'$} is the
digraph whose vertices are $V_1,\ldots,V_k$ and in which
$V_iV_j$ is an edge if and only if~$G'$ contains a $V_i$-$V_j$
edge. Note that the latter holds if and only if
$(V_i,V_j)_{G'}$ is $\epsilon$-regular and has density at
least~$d$. It turns out that~$R'$ inherits many properties
of~$G$, a fact that is crucial in our proof. However,~$R'$ is
not necessarily oriented even if the original digraph~$G$ is.
The following straightforward lemma, taken from a paper of
Kelly, K\"uhn and Osthus~\cite{kelly_kuhn_osthus_hc_orient},
shows that by discarding edges with appropriate probabilities
one can go over to a reduced oriented graph $R\subseteq R'$
which still inherits many of the properties of~$G$.
\begin{lemma}\label{lemma:reduced_oriented}
For every $\epsilon\in (0,1)$ there exist
integers~$M'=M'(\epsilon)$ and $n_0=n_0(\epsilon)$ such that
the following holds. Let $d \in [ 0,1 ]$, let~$G$ be an
oriented graph of order at least~$n_0$ and let~$R'$ be the
reduced digraph and~$G'$ the pure digraph obtained by applying
the Diregularity lemma to~$G$ with parameters $\epsilon$, $d$
and~$M'$. Then~$R'$ has a spanning oriented subgraph~$R$ with
\begin{itemize}
\item[{\rm (a)}] $\delta^+(R)\ge
    (\delta^+(G)/|G|-(3\epsilon+d))|R|$,
\item[{\rm (b)}]
    $\delta^-(R)\ge(\delta^-(G)/|G|-(3\epsilon+d))|R|$,
\item[{\rm (c)}]
    $\delta^0(R)\ge(\delta^0(G)/|G|-(6\epsilon+4d))|R|$.
\end{itemize}
\end{lemma}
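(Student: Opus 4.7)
The plan is to transfer the degree conditions from $G$ to the reduced digraph $R'$ by a routine averaging argument, and then to construct the oriented spanning subgraph $R\subseteq R'$ via a random orientation of the ``two-cycles'' of $R'$ (pairs $\{V_i,V_j\}$ with both $V_iV_j$ and $V_jV_i$ in $E(R')$), weighted by the bipartite densities of the pure digraph $G'$.

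For the first step, I fix a cluster $V_i$ and any $v\in V_i$. The degree form of the Diregularity lemma gives $d^+_{G'}(v)>\delta^+(G)-(d+\epsilon)n$. Since $G'[V_i]$ is empty, $v$ sends at most $|V_0|\le\epsilon n$ edges into $V_0$, and by the density dichotomy in the Diregularity lemma $v$ sends no $G'$-edge into a cluster $V_j$ with $V_iV_j\notin E(R')$. Averaging $d^+_{G'}(v)$ over $v\in V_i$ therefore gives
\[
m^2\sum_{V_j\in N^+_{R'}(V_i)}d_{G'}(V_i,V_j)\;\ge\;m\bigl(\delta^+(G)-(d+2\epsilon)n\bigr),
\]
which yields both $d^+_{R'}(V_i)\ge(\delta^+(G)/|G|-d-2\epsilon)|R'|$ and the stronger density-sum estimate $\sum_{V_j\in N^+_{R'}(V_i)}d_{G'}(V_i,V_j)\ge(\delta^+(G)/|G|-d-2\epsilon)|R'|$; the symmetric bounds hold for $d^-_{R'}$.

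For the second step, for each unordered pair $\{V_i,V_j\}$ with both directions in $R'$ I set $p_{ij}:=d_{G'}(V_i,V_j)/(d_{G'}(V_i,V_j)+d_{G'}(V_j,V_i))$. Because $G$ is oriented the two densities sum to at most $1$, so $p_{ij}\ge d_{G'}(V_i,V_j)$, and in fact $p_{ij}\in[d,1-d]$. Independently across such pairs I retain $V_iV_j$ with probability $p_{ij}$ and $V_jV_i$ with probability $1-p_{ij}$, giving a random oriented subgraph $R\subseteq R'$. Writing $Y_i$ for the 2-cycle partners of $V_i$ in $R'$, one then has
\[
\mathbb{E}[d^+_R(V_i)] \;=\; |N^+_{R'}(V_i)\setminus Y_i|+\sum_{j\in Y_i}p_{ij}\;\ge\;\sum_{V_j\in N^+_{R'}(V_i)}d_{G'}(V_i,V_j)\;\ge\;\bigl(\delta^+(G)/|G|-d-2\epsilon\bigr)|R'|,
\]
using $p_{ij}\ge d_{G'}(V_i,V_j)$ for $j\in Y_i$ and $1\ge d_{G'}(V_i,V_j)$ otherwise; symmetrically for $\mathbb{E}[d^-_R(V_i)]$.

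A Hoeffding bound on the $\le|R'|$ independent $\{0,1\}$-variables contributing to $d^+_R(V_i)$ gives $d^+_R(V_i)\ge\mathbb{E}[d^+_R(V_i)]-\epsilon|R'|$ with probability at least $1-\exp(-\Omega(\epsilon^2|R'|))$, and analogously for $d^-_R(V_i)$. A union bound over the $k\le M$ clusters and over in- and out-degrees shows that, provided $M'=M'(\epsilon)$ is large enough, with positive probability every $V_i$ simultaneously satisfies $d^+_R(V_i)\ge(\delta^+(G)/|G|-d-3\epsilon)|R'|$ and $d^-_R(V_i)\ge(\delta^-(G)/|G|-d-3\epsilon)|R'|$, yielding a deterministic $R$ satisfying (a), (b) and (hence) (c). The main obstacle, and the one crucial idea, is the density weighting of $p_{ij}$: uniform random orientation of each 2-cycle would only give $\mathbb{E}[d^+_R(V_i)]\approx d^+_{R'}(V_i)-|Y_i|/2$, a loss which at a densely 2-cycled cluster (possible, e.g., when the underlying reduced graph is almost symmetric) is too large to match the claimed constants.
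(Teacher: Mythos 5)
Your proof is correct and follows the same route the paper attributes to the cited Kelly--K\"uhn--Osthus source: for each $2$-cycle of $R'$ keep one direction at random with probability weighted by the $G'$-densities, lower-bound the expected out- and in-degrees via the averaging estimate $\sum_{V_j\in N^+_{R'}(V_i)}d_{G'}(V_i,V_j)\ge(\delta^+(G)/|G|-d-2\epsilon)|R'|$, and finish with a Chernoff/Hoeffding bound and a union bound over the clusters (choosing $M'(\epsilon)$ large enough). You are also right that the density-weighting of $p_{ij}$ is the one essential idea---uniform orientation of the $2$-cycles can lose roughly $|Y_i|/2$ from the out-degree and would not match the stated constants---and the bounds $d+3\epsilon$ in (a) and (b) emerge exactly as you compute, with (c) then following a fortiori.
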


The oriented graph~$R$ given by
Lemma~\ref{lemma:reduced_oriented} is called the \emph{reduced
oriented graph}. The spanning oriented subgraph~$G^*$ of the
pure digraph~$G'$ obtained by deleting all the $V_i$-$V_j$
edges whenever $V_iV_j\in E(R')\setminus E(R)$ is called the
\emph{pure oriented graph}. Given an oriented subgraph
$S\subseteq R$, the \emph{oriented subgraph of~$G^*$
corresponding to~$S$} is the oriented subgraph obtained
from~$G^*$ by deleting all those vertices that lie in clusters
not belonging to~$S$ as well as deleting all the $V_i$-$V_j$
edges for all pairs $V_i,V_j$ with $V_iV_j\notin E(S)$.

At various stages in our proof we will need some pairs of
clusters to be not just regular but super-regular. The
following well-known result tells us that we can indeed do this
whilst maintaining the regularity of all other pairs.

\begin{lemma}\label{lemma:super}
Let~$\epsilon\ll d,1/\Delta$ and let~$R$ be a reduced oriented
graph of~$G$ as given by
Lemmas~\textup{\ref{lemma:diregularity_lemma}}
and~\textup{\ref{lemma:reduced_oriented}}. Let~$S$ be an
oriented subgraph of~$R$ of maximum degree~$\Delta$. Then we
can move exactly~$2\Delta\epsilon |V_i|$ vertices from each
cluster into~$V_0$ such that each pair~$(V_i,V_j)$
corresponding to an edge of~$S$ becomes
$(2\epsilon,d/2)$-super-regular and every pair corresponding to
an edge of~$R\sm S$ becomes~$2\epsilon$-regular with density at
least~$d-\epsilon$.
\end{lemma}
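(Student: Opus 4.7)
The plan is to identify a small set of ``bad'' vertices in each cluster whose removal makes all $S$-edges super-regular, then pad this set up to the required size of $2\Delta\epsilon|V_i|$ with arbitrary vertices, and finally verify that removing such a small fraction preserves (a slightly weaker form of) $\epsilon$-regularity on every pair.

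More precisely, for each ordered pair $V_iV_j\in E(R)$ write $d_{ij}:=d_{G^*}(V_i,V_j)\geq d$. Since $(V_i,V_j)_{G^*}$ is $\epsilon$-regular, the set $B^+_{ij}\subseteq V_i$ of vertices $x$ with $d^+_{V_j}(x)<(d_{ij}-\epsilon)|V_j|$ satisfies $|B^+_{ij}|\leq \epsilon|V_i|$; otherwise $B^+_{ij}$ together with $Y=V_j$ would witness a violation of $\epsilon$-regularity. Now fix a cluster $V_i$, and let $B_i$ be the union of $B^+_{ij}$ over all $V_j\in N^+_S(V_i)$ together with $B^-_{ji}$ over all $V_j\in N^-_S(V_i)$. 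Since $S$ has maximum degree $\Delta$, there are at most $\Delta$ such neighbours in total, so $|B_i|\leq \Delta\epsilon|V_i|$. Choose $B_i^+\supseteq B_i$ of size exactly $2\Delta\epsilon|V_i|$ by adding arbitrary vertices of $V_i$, and move $B_i^+$ into $V_0$. Write $V_i':=V_i\setminus B_i^+$, so $|V_i'|=(1-2\Delta\epsilon)|V_i|$.

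It remains to verify the two regularity assertions, for which the key point is the choice $\epsilon\ll d,1/\Delta$. For any edge $V_iV_j$ of $R$ (in $S$ or not), if $X\subseteq V_i'$ and $Y\subseteq V_j'$ satisfy $|X|>2\epsilon|V_i'|$ and $|Y|>2\epsilon|V_j'|$, then $|X|>2\epsilon(1-2\Delta\epsilon)|V_i|>\epsilon|V_i|$ and similarly for $Y$; by $\epsilon$-regularity of $(V_i,V_j)_{G^*}$ this gives $|d(X,Y)-d_{ij}|<\epsilon$, so the pair $(V_i',V_j')_{G^*}$ is $2\epsilon$-regular. Taking $X=V_i'$, $Y=V_j'$ also yields $d(V_i',V_j')>d_{ij}-\epsilon\geq d-\epsilon$, which is the density bound for $R\setminus S$-edges. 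For an edge $V_iV_j$ of $S$, every $x\in V_i'$ avoided $B^+_{ij}$, so
\[
d^+_{V_j'}(x)\;\geq\;(d_{ij}-\epsilon)|V_j|-2\Delta\epsilon|V_j|\;\geq\;(d-(2\Delta+1)\epsilon)|V_j|\;\geq\;(d/2-2\epsilon)|V_j'|,
\]
where the last inequality uses $\epsilon\ll d,1/\Delta$; the analogous bound for $d^-_{V_i'}$ of vertices in $V_j'$ follows because we also excluded $B^-_{ij}$. Hence $(V_i',V_j')_{G^*}$ is $(2\epsilon,d/2)$-super-regular.

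There is no real obstacle here: the proof is entirely a bookkeeping exercise with the regularity parameters. The only ``content'' is the standard observation that in an $\epsilon$-regular pair only $\epsilon|V_i|$ vertices can have atypically small degree on one side, together with the routine slicing statement that regularity survives the deletion of a $\ll\epsilon$ fraction of each class. The choice $\epsilon\ll d,1/\Delta$ is precisely what is needed so that the cumulative effect of removing $2\Delta\epsilon|V_i|$ vertices per cluster is absorbed by the slack between $(d-\epsilon)$ and $d/2$ and between $\epsilon$ and $2\epsilon$.
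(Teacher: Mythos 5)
Your proof is correct and is the standard argument for this kind of ``make selected pairs super-regular by discarding low-degree vertices'' statement; the paper itself presents Lemma~\ref{lemma:super} as a well-known fact without proof, and what you have written is exactly the argument the author is implicitly invoking. The only point worth spelling out a little more is the final step for $2\epsilon$-regularity: from $|d(X,Y)-d_{ij}|<\epsilon$ and (taking $X=V_i'$, $Y=V_j'$) $|d(V_i',V_j')-d_{ij}|<\epsilon$ one concludes $|d(X,Y)-d(V_i',V_j')|<2\epsilon$ by the triangle inequality, which is what $2\epsilon$-regularity of $(V_i',V_j')$ actually requires.
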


In our proof of Theorem~\ref{thm:arb_ham} we will also need a
consequence of the Blow-up lemma of Koml\'os, S\'ark\"ozy and
Szemer\'edi~\cite{Komlos_Szemeredi_Blowup_Lemma}. Roughly
speaking, it says that an $r$-partite graph formed by~$r$
clusters such that all the pairs of these clusters are
$(\eps,d)$-super-regular behaves like a complete $r$-partite
graph with respect to containing graphs of bounded maximum
degree as subgraphs.

\begin{lemma}[Blow-up Lemma, Koml\'os, S\'ark\"ozy and
Szemer\'edi~\cite{Komlos_Szemeredi_Blowup_Lemma}]\label{standardblowup}
Given a graph $F$ on $[k]$ and positive integers~$d$ and
$\Delta$ there exists a positive real
$\epsilon_0=\epsilon_0(d,\Delta,k)$ such that the following
holds for all positive numbers $\ell_1,\dots,\ell_k$ and all
$0<\epsilon\le \epsilon_0$. Let $F'$ be the graph obtained from
$F$ by replacing each vertex $i\in F$ with a set $V_i$ of
$\ell_i$ new vertices and joining all vertices in $V_i$ to all
vertices in $V_j$ whenever $ij$ is an edge of $F$. Let $G'$ be
a spanning subgraph of $F'$ such that for every edge $ij\in F$
the graph $(V_i,V_j)_{G'}$ is $(\epsilon,d)$-super-regular.
Then $G'$ contains a copy of every subgraph $H$ of $F'$ with
maximum degree $\Delta(H)\le \Delta$. Moreover, this copy of
$H$ in $G'$ maps the vertices of $H$ to the same sets $V_i$ as
the copy of $H$ in~$F'$, i.e.~if $h \in V(H)$ is mapped to
$V_i$ by the copy of~$H$ in~$F'$, then it is also mapped to
$V_i$ by the copy of $H$ in~$G'$.
\end{lemma}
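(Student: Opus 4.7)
The plan is to emulate the classical Koml\'os--S\'ark\"ozy--Szemer\'edi proof, which combines a randomised greedy embedding with a Hall-type finishing step on a small buffer. The approach splits into four stages: reserve a random buffer in each cluster, greedily embed most of $H$, maintain enough ``flexibility'' so that the buffer can still be matched, and conclude with a perfect matching.

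First, I would fix auxiliary constants with $\epsilon \ll \beta \ll d^\Delta/\Delta$ and, in each cluster $V_i$, pick a random buffer $B_i\subset V_i$ of size roughly $\beta\ell_i$; correspondingly designate a set $H^*\subset V(H)$ containing exactly $|B_i|$ vertices destined for $V_i$, chosen to be $3$-independent in $H$ so that buffer vertices do not interact through shared neighbours. Standard regularity-inheritance then guarantees that each pair $(V_i\setminus B_i, V_j\setminus B_j)$ remains super-regular and each pair $(V_i\setminus B_i, B_j)$ remains $\epsilon$-regular with density close to $d$, with only a mild worsening of the constants.

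Second, I would order the vertices of $H\setminus H^*$ as $h_1,\dots,h_r$ so that each $h_t$ has at most $\Delta$ already-embedded neighbours (for instance via a breadth-first search in each component of $H$), and embed them one at a time. At step $t$ the image of $h_t$ is selected uniformly at random from the set $C_t$ of unused vertices of $V_{i_t}\setminus B_{i_t}$ that are adjacent, through the appropriate super-regular pairs, to every already-placed image of an $H$-neighbour of $h_t$. An inductive application of $\epsilon$-regularity to the at most $\Delta$ relevant neighbourhood intersections yields $|C_t|\geq ((d-\epsilon)/2)^{\Delta}\ell_{i_t}$ throughout, so the greedy step never fails. In parallel, for every buffer vertex $h^*\in H^*$ and every unused $v$ in its target buffer cluster, I would track the indicator that $v$ remains \emph{compatible} with $h^*$, meaning $v$ is joined to every image already assigned to an $H$-neighbour of $h^*$.

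Third, I would conclude by constructing the auxiliary bipartite graph between $H^*$ and unused buffer vertices with compatibility as the edge relation, and use Hall's theorem to extract a perfect matching that completes the embedding. The hard part is the concentration argument justifying that Hall's condition holds: throughout a long random greedy process whose choices are highly correlated, one must certify that the compatible-set size for every not-yet-placed buffer vertex is simultaneously close to its expectation. This is the main obstacle, handled in the original proof by a carefully designed bounded-differences martingale (roughly, Azuma-type concentration on the sequence of greedy choices) together with an invariant controlling ``dangerous'' configurations; a rescue step swaps newly placed vertices out whenever a threatened buffer-candidate count drops. Once this concentration is in hand, the buffer reservation, greedy lower bound, and final Hall matching slot together routinely.
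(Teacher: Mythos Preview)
The paper does not give its own proof of this lemma: it is simply quoted as the Blow-up Lemma of Koml\'os, S\'ark\"ozy and Szemer\'edi, with a citation to their original paper, and is then used as a black box (to derive Lemmas~\ref{lemma:blowupused} and~\ref{lemma:linking}). So there is nothing in the paper to compare your attempt against.

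That said, your outline is a fair high-level summary of the original Koml\'os--S\'ark\"ozy--Szemer\'edi argument: reserve random buffers, run a randomised greedy embedding on the non-buffer vertices while tracking candidate sets, and finish with a K\"onig--Hall matching into the buffers. You are also right that the genuine work is the concentration/correction step keeping every buffer candidate set large, and you correctly flag this as the part your sketch does not actually carry out. As a proof \emph{sketch} this is fine; as a proof it is incomplete exactly where you say it is. For the purposes of this paper, however, no proof is expected or supplied --- the lemma is imported wholesale from~\cite{Komlos_Szemeredi_Blowup_Lemma}.
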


The tool we shall actually use is the following consequence of
the Blow-up lemma. The proof of it uses similar ideas to those
in recent work of Christofides, Keevash, K\"uhn and
Osthus~\cite{CKKO}.

\begin{lemma}\label{lemma:blowupused}
Suppose that all the following hold:
\begin{itemize}
\item $0<1/m\ll \epsilon \ll d \ll 1$.
\item $U_1,\ldots,U_k$ are pairwise disjoint sets of
    size~$m$, for some~$k\geq 6$, and~$G$ is a digraph
    on~$U_1\cup\ldots \cup U_k$ such that
    each~$(U_i,U_{i+1})_G$ is~$(\epsilon,d)$-super-regular
    (where by convention we consider~$U_{k+1}$ to
    be~$U_1$);
\item $A_1,\ldots,A_k$ are pairwise disjoint sets of
    vertices with~$(1-\epsilon)m \leq |A_i|=:m_i\leq m$
    and~$H$ is a digraph on~$A_1\cup\ldots \cup A_k$ which
    is a vertex-disjoint union of paths of length at least
    \textup{3}, where every edge going out of~$A_i$ ends
    in~$A_{i+1}$ for all~$i$;
\item $S_1\subseteq U_1,\ldots ,S_k\subseteq U_k$ are sets
    of size~$|S_i|=m_i$;
\item For each path~$P$ of~$H$ we are given
    vertices~$x_P,y_P\in V(G)$ such that if the initial
    vertex~$a_P$ of~$P$ belongs to~$A_i$ then~$x_P\in S_i$
    and if the final vertex~$b_P$ of~$P$ belongs to~$A_j$
    then~$y_P\in S_j$, and the vertices~$x_P,y_P$ are
    distinct as~$P$ ranges over the paths of~$H$.
\end{itemize}
Then there is an embedding of~$H$ into~$G_S:=G[\bigcup S_i]$ in
which every path~$P$ of~$H$ is mapped to a path that starts
at~$x_P$ and ends at~$y_P$.
\end{lemma}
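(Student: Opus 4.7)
The plan is to deduce the embedding of $H$ into $G_S$ from the Blow-up Lemma (Lemma~\ref{standardblowup}) by first pre-embedding the endpoints $x_P,y_P$ together with the vertices of each path adjacent to them, and then applying Lemma~\ref{standardblowup} to embed the interior segments.

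First I would observe that since $|S_i|\geq(1-\epsilon)m$, the restricted pair $(S_i,S_{i+1})_G$ inherits super-regularity from $(U_i,U_{i+1})_G$, so each such pair is, say, $(2\epsilon,d/2)$-super-regular. Next, for every path $P=p_1\dots p_t$ of $H$, I would set $\phi(a_P):=x_P$ and $\phi(b_P):=y_P$, and then use a Hall-type matching argument to simultaneously embed the near-endpoint vertices $p_2$ and $p_{t-1}$ into restricted candidate sets of the form $S_j\cap N_G^+(x_P)$ or $S_j\cap N_G^-(x_P)$ (and similarly at $y_P$), the sign being dictated by the orientation of the incident edge of $H$. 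By super-regularity each such candidate set has linear size $\geq(d-2\epsilon)m$. For paths of length exactly~$3$ the two near-endpoints are additionally adjacent to each other, so the matching must be performed jointly for each pair $(p_2,p_3)$: by regularity, the number of valid pairs $(u,v)\in S_j\times S_{j'}$ completing a path of the appropriate orientation from $x_P$ to $y_P$ is $\Omega(m^2)$, which is more than enough room.

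The bulk of the work is in the matching step above, which is essentially the image-restriction feature of the strengthened Blow-up Lemma employed by Christofides, Keevash, K\"uhn and Osthus~\cite{CKKO}. Hall's condition is verified cluster by cluster (or pair of clusters, in the length-$3$ case): for any subset $T$ of near-endpoint vertices in $A_j$ with corresponding endpoint set $X\subseteq S_{j\pm1}$, $\epsilon$-regularity of $(S_{j\pm1},S_j)_G$ gives that $\bigl|\bigcup_{x\in X}N_G^\pm(x)\cap S_j\bigr|\geq m_j-\epsilon m$ whenever $|X|>\epsilon m$, and this suffices for Hall whenever the number of constrained vertices in any one cluster is bounded away from $|S_j|$. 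Once the pre-embedding is complete, the residual subsets of the $S_i$'s still have size $(1-o(1))m$ per cluster and so form super-regular pairs; Lemma~\ref{standardblowup} applied to the residual blown-up cycle and the residual forest (of maximum degree $2$) then embeds the interior vertices of paths of length $\geq 4$, while for length-$3$ paths no interior remains to embed.

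The principal obstacle is the verification of Hall's condition in the tight case where near-endpoints account for essentially all of some $A_j$, and especially in the length-$3$ case where the matching of $(p_2,p_3)$-pairs must jointly respect the adjacency between them as well as the incidences with the pre-embedded $x_P$ and $y_P$. This demands a careful joint analysis using both the super-regularity of individual pairs and the regularity structure along the cycle $U_1,\dots,U_k,U_1$; the hypothesis $k\geq 6$ is then used to ensure that endpoints, near-endpoints and interiors distribute across sufficiently many clusters for the matching to go through.
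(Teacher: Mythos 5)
Your approach differs materially from the paper's, and it has a genuine gap. The version of the Blow-up lemma stated in this paper (Lemma~\ref{standardblowup}) carries no image-restriction feature: it embeds a bounded-degree subgraph $H$ into a super-regular blown-up graph, but gives no control over which vertex of $V_i$ a given vertex of $H$ lands on. Consequently, after you have pre-embedded $x_P$, $y_P$, $p_2$ and $p_{t-1}$, nothing in Lemma~\ref{standardblowup} forces the embedded interior of $P$ to attach to the images of $p_2$ and $p_{t-1}$; the residual forest and the pre-embedded near-endpoints live in separate applications with no adjacency guarantee between them. You acknowledge this by invoking ``the image-restriction feature of the strengthened Blow-up Lemma employed by Christofides, Keevash, K\"uhn and Osthus,'' but that is a black box not used in this paper, and even where such strengthenings exist they restrict only a small fraction of the vertices in each cluster, whereas your pre-embedding restricts essentially every vertex of a near-endpoint cluster when $H$ consists mostly of length-$3$ paths --- precisely the case you flag as ``the principal obstacle'' and do not resolve. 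Relatedly, the claim that the residual $S_i$'s retain $(1-o(1))m$ vertices after pre-embedding is false in general: for a union of length-$3$ paths, every vertex is an endpoint or near-endpoint, so the ``Hall matching'' step is already the whole embedding, and a bare $\epsilon$-regularity Hall estimate cannot deliver it (small defect sets $|X|\le\epsilon m$ give no information, and you must also track, per cluster, the competing demands of paths of all eight length-$3$ orientation types simultaneously). In short, the reduction to a matching is itself as hard as the lemma.

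The paper takes a different route that avoids these difficulties. Rather than peeling off endpoints and handing the interior to the Blow-up lemma, it subdivides \emph{every} path of~$H$ into segments of length $3$, $4$ or $5$ with consistently chosen shared endpoints $x_{i,j},y_{i,j}$ in the $S_i$'s, then groups the segments into at most $56k$ collections $\P_{i,t}$ indexed by starting cluster $i$ and the orientation pattern $t$ of a short segment. The small collections (at most $d^2 m$ segments) are handled greedily, consuming so few vertices per cluster that super-regularity survives; for each large collection, a uniformly random partition of the remaining vertices (controlled by Lemma~\ref{lemma:splitsuperreg}) produces, per collection, a disjoint short chain of super-regular sub-pairs matching the orientation pattern $t$, and Lemma~\ref{lemma:linking} --- which, unlike Lemma~\ref{standardblowup}, \emph{does} realise a prescribed bijection $f:U_1\to U_p$ across such a chain --- embeds all segments of that type simultaneously with the required endpoints. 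The hypothesis $k\geq 6$ enters to ensure that a length-$5$ segment fits in distinct clusters; the speculative role you assign to $k\geq 6$ is not the operative one.
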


The following immediate consequence of the Blow-up lemma is
needed in the proof of Lemma~\ref{lemma:blowupused}.

\begin{lemma}\label{lemma:linking}
For every~$0<d<1$ and~$p\geq 4$ there exists~$\epsilon_0>0$
such that the following holds for~$0<\epsilon<\epsilon_0$.
Let~$U_1,\ldots,U_p$ be pairwise disjoint sets of size~$m$, for
some~$m$, and suppose~$G$ is a graph on~$U_1\cup\ldots\cup U_p$
such that each pair~$(U_i,U_{i+1})$, $1\leq i\leq p-1$ is
$(\epsilon,d)$-super-regular. Let~$f:U_1\rightarrow U_p$ be any
bijective map. Then there are~$m$ vertex-disjoint paths
from~$U_1$ to~$U_p$ so that for every~$x\in U_1$ the path
starting at~$x$ ends at~$f(x)\in U_p$.
\end{lemma}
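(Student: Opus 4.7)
The plan is to apply the Blow-up lemma (Lemma~\ref{standardblowup}) to find an initial collection of $m$ vertex-disjoint paths through the clusters $U_1,\dots,U_p$, and then to adjust the paths by local rerouting so as to realise the prescribed endpoint bijection $f$.

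First, take $F$ to be the path $1$--$2$--$\cdots$--$p$ and let $H\subseteq F'$ be the disjoint union of $m$ paths of length $p-1$, the $k$-th path having its $i$-th vertex in $U_i$. Since $\Delta(H)=2$ and each pair $(U_i,U_{i+1})_G$ is $(\epsilon,d)$-super-regular, Lemma~\ref{standardblowup} yields $m$ vertex-disjoint paths $P_1,\dots,P_m$ in $G$ from $U_1$ to $U_p$. These paths induce a bijection $g\colon U_1\to U_p$ sending each $x$ to the endpoint in $U_p$ of the path starting at $x$; however the Blow-up lemma does not control $g$, so in general $g\neq f$, and the task reduces to correcting $g$ to $f$.

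To do this, I would perform a sequence of \emph{swap operations}. Given two vertex-disjoint paths $P=(x_P,a_2,\dots,a_{p-1},w_P)$ and $Q=(x_Q,b_2,\dots,b_{p-1},w_Q)$ from the current collection and an index $\ell\in\{2,\dots,p-2\}$ with $a_\ell b_{\ell+1},\,b_\ell a_{\ell+1}\in E(G)$, replacing $P,Q$ by
\[(x_P,a_2,\dots,a_\ell,b_{\ell+1},\dots,b_{p-1},w_Q)\quad\text{and}\quad(x_Q,b_2,\dots,b_\ell,a_{\ell+1},\dots,a_{p-1},w_P)\]
preserves vertex-disjointness and transposes $w_P,w_Q$ in the induced endpoint bijection. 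Since $\tau:=f\circ g^{-1}$ is a permutation of $U_p$ and every permutation is a product of transpositions, it suffices to realise each of the at most $m$ required transpositions by a valid swap. The hypothesis $p\geq 4$ guarantees that $\{2,\dots,p-2\}$ is non-empty, and the $\epsilon$-regularity of each super-regular pair $(U_\ell,U_{\ell+1})_G$ ensures that for most levels $\ell$ both cross-edges are present; when a direct swap between two specific paths fails, the desired transposition can be realised in two steps by routing through an auxiliary third path.

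The main obstacle is carrying out the whole sequence of swaps without conflicts, since each swap alters the current paths and hence the edges available for subsequent swaps. I would process the cycles of $\tau$ one at a time, and at each stage use a Hall-type argument on a bipartite auxiliary graph (whose edges record available swap levels between pairs of paths) to show that a compatible next swap is always available. Choosing $\epsilon_0$ sufficiently small relative to $d$ ensures that the cumulative distortion of the Blow-up lemma structure across the $O(m)$ swaps remains negligible, so the final collection of paths still realises the bijection $f$ vertex-disjointly in $G$.
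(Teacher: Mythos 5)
There is a genuine gap in the swap step. After the Blow-up lemma hands you the initial paths, the existence of the swap edges $a_\ell b_{\ell+1}$ and $b_\ell a_{\ell+1}$ for two \emph{specific} paths $P,Q$ at a \emph{specific} level $\ell$ is not a consequence of regularity: super-regularity gives that $a_\ell$ has at least $(d-\epsilon)m$ neighbours in $U_{\ell+1}$ and that $a_{\ell+1}$ has at least $(d-\epsilon)m$ neighbours in $U_\ell$, but the two resulting sets of candidate swap-partners need not intersect (their sizes may sum to less than $m$ whenever $d<1/2$), and neither need contain the particular $Q$ you want. When $p=4$ there is only one interior level, so ``for most levels $\ell$ both cross-edges are present'' says nothing. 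The fallback via an auxiliary third path together with an unspecified Hall-type argument is not fleshed out, and since the Blow-up lemma gives no distributional control over the paths it produces, you cannot treat them as random. Your remark about controlling ``cumulative distortion of the Blow-up lemma structure'' also misidentifies the difficulty: once the paths are fixed there is nothing left to distort, only individual edges that are either present or absent.

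The paper intends a different and genuinely immediate route: identify each $x\in U_1$ with $f(x)\in U_p$ to obtain a graph $\tilde G$ on $U_1\cup\cdots\cup U_{p-1}$. The pairs $(U_i,U_{i+1})$ for $1\leq i\leq p-2$ are untouched, and the new pair $(U_{p-1},U_1)$ is a relabelling of $(U_{p-1},U_p)$ via $f^{-1}$, so all $p-1$ consecutive pairs around the cycle $U_1,\ldots,U_{p-1},U_1$ are $(\epsilon,d)$-super-regular. Now apply Lemma~\ref{standardblowup} with $F$ a cycle of length $p-1$ --- this is precisely where $p\geq 4$ is needed, so that $p-1\geq 3$ and $F$ is a simple cycle --- and with $H$ a vertex-disjoint union of $m$ cycles of length $p-1$, one vertex in each class. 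The resulting $m$ disjoint cycles in $\tilde G$, unwound in $G$, are exactly $m$ vertex-disjoint $U_1$--$U_p$ paths in which the path starting at $x$ ends at $f(x)$; no correction step is required, which is why the paper can state the lemma without proof as a direct consequence of the Blow-up lemma.
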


We also need the following random partitioning property of
super-regular pairs which says that with high probability (\ie
with probability tending to 1 as $m\rightarrow \infty$) all new
pairs created by a random partition of a super-regular pair are
themselves super-regular.\COMMENT{Does this/will this appear in
another paper or count as well-known? Or else should I add a
proof. It's quick and easy but I'd prefer to present it as
fact.}

\begin{lemma}\label{lemma:splitsuperreg}
Suppose that the following hold.
\begin{itemize}
\item $0<\epsilon < \theta< d < 1/2$, $k\geq 2$ and for
    $1\leq i\leq k$ we have~$a_i,b_i>\theta$ with
    $\sum_{i=1}^k a_i= \sum_{i=1}^k b_i=1$.
\item $G=(A,B)$ is an $(\epsilon,d)$-super-regular pair
    with $|A|=|B|=m$ sufficiently large.
\item $A=A_1\cup \ldots \cup A_k$ and $B=B_1\cup \ldots
    \cup B_k$ are partitions chosen uniformly at radnom
    with~$|A_i|=a_i m$ and~$|B_i|=b_i m$ for~$1\leq i\leq
    r$.
\end{itemize}
Then with high probability $(A_i,B_j)$ is
$(\theta^{-1}\epsilon,d/2)$-super-regular for every~$1\leq
i,j\leq k$. \qed
\end{lemma}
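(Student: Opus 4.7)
The plan is to verify both defining conditions of $(\theta^{-1}\epsilon, d/2)$-super-regularity for each of the (at most~$\theta^{-2}$) pairs~$(A_i, B_j)$ separately and then take a union bound over them. The crucial size bound is $|A_i| = a_i m \geq \theta m$ and $|B_j| = b_j m \geq \theta m$, so each new part is still linear in~$m$; in particular any $X\subseteq A_i$ with $|X|\geq \theta^{-1}\epsilon|A_i|$ automatically satisfies $|X| \geq \epsilon m = \epsilon |A|$, and similarly for any~$Y\subseteq B_j$ of the threshold size.

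For the \emph{regularity} condition, the original $\epsilon$-regularity of~$(A,B)$ then immediately yields $|d(X,Y) - d(A,B)| < \epsilon$, so it suffices to prove that $|d(A_i, B_j) - d(A,B)| < \epsilon$ holds with high probability; a triangle inequality will then give $|d(X,Y) - d(A_i, B_j)| < 2\epsilon < \theta^{-1}\epsilon$, since $\theta < 1/2$. The required concentration I would establish by exposing the random assignment of the vertices of $A\cup B$ to their parts one vertex at a time: any single-step change alters~$e(A_i, B_j)$ by at most~$m$, so Azuma's inequality delivers $|e(A_i, B_j) - \E\, e(A_i, B_j)|\leq m^{7/4}$ (say) with superpolynomially small failure probability, and dividing by $|A_i||B_j|\geq \theta^2 m^2$ gives a density deviation of $o(1) \ll \epsilon$.

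For the \emph{minimum-degree} condition, fix $a \in A$ and~$j$. Then $d_{B_j}(a)$ is hypergeometrically distributed with mean $b_j d_B(a) \geq b_j(d-\epsilon)m$ and sample size $|B_j| = b_j m$, so a Chernoff/Hoeffding tail bound gives $|d_{B_j}(a) - b_j d_B(a)|\leq \epsilon m$ with probability $1-e^{-\Omega(m)}$. A union bound over the $2km$ pairs $(a,j)$ and $(b,i)$ succeeds \whp. Dividing through by $|B_j|\geq \theta m$ then yields
\[ \frac{d_{B_j}(a)}{|B_j|}\ \geq\ (d-\epsilon) - \frac{\epsilon}{b_j}\ \geq\ d-\epsilon-\theta^{-1}\epsilon\ \geq\ \frac{d}{2} - \theta^{-1}\epsilon, \]
using $\epsilon \ll d$; the corresponding lower bound for $d_{A_i}(b)$ is symmetric.

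The only mildly delicate point is the choice of concentration tool: because the random partition has prescribed part sizes, the edge- and degree-indicators are not independent. This is precisely what Hoeffding's inequality for hypergeometric sampling (for the degree condition) and the bounded-differences martingale (for the densities) are designed to cover, and in both settings the per-step change is trivially of order~$m$, so the standard calculations go through without further incident.
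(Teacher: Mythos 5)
Your proof is correct and gives a complete argument; the paper itself states this lemma without any proof (it ends with a \qed and a source comment noting that the author chose to present it as a fact), so there is no authorial version to compare against. Two remarks on the details.

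The martingale step for $|d(A_i,B_j)-d(A,B)|$ is superfluous. Since $|A_i|=a_im>\theta m>\epsilon m$ and $|B_j|=b_jm>\epsilon m$, the $\epsilon$-regularity of $(A,B)$ already gives $|d(A_i,B_j)-d(A,B)|<\epsilon$ \emph{deterministically}, and your triangle-inequality argument then finishes the regularity condition exactly as written. The only genuinely probabilistic ingredient of the lemma is the degree condition.

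In that degree estimate there is a small loose end: the final inequality $d-\epsilon-\theta^{-1}\epsilon\geq d/2-\theta^{-1}\epsilon$ amounts to $\epsilon\leq d/2$, which is not implied by the stated hypothesis $\epsilon<\theta<d<1/2$ (take $\epsilon=0.3$, $\theta=0.35$, $d=0.4$). You flag this with ``using $\epsilon\ll d$'', which does hold in every application inside the paper, but it is cleaner not to import the extra assumption. Since your hypergeometric tail bound is exponential in $m$, replace the deviation $\epsilon m$ by, say, $m^{3/4}$: then $d_{B_j}(a)/|B_j|\geq(d-\epsilon)-o(1)\geq d-2\epsilon$ once $m$ is large, and $d-2\epsilon\geq d/2-\theta^{-1}\epsilon$ holds because $\theta<1/2$ forces $\theta^{-1}>2$, whence $(2-\theta^{-1})\epsilon\leq 0\leq d/2$. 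With that adjustment the proof closes under exactly the hypotheses given.
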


With these tools we can now prove Lemma~\ref{lemma:blowupused}.

\begin{proof}[of Lemma~\ref{lemma:blowupused}]
Enumerate the paths of~$H$ as~$P_1,\ldots,P_p$ and split them
up arbitrarily into paths of length 3, 4 or 5 such that~$P_i$
becomes~$P_{i,1},\ldots,P_{i,q_i}$. Let~$a_{i,j}$ and~$b_{i,j}$
be the initial vertex and the final vertex of~$P_{i,j}$
respectively. Then~$a_{i,j}=b_{i,j-1}$ for~$2\leq j\leq q_i$.
Let~$E_s$ consist of all~$a_{i,j}$ belonging to the
cluster~$A_s$ and similarly let~$F_s$ consist of all~$b_{i,j}$
belonging to the cluster~$A_s$. For each~$a_{i,j}\in E_s$ pick
a distinct vertex~$x_{i,j}\in S_s$ and for each~$b_{i,j}\in
F_s$ pick a distinct vertex~$y_{i,j}\in S_s$ such that
if~$a_{i,j}=b_{i,j-1}$ then~$x_{i,j}=y_{i,j-1}$,
$x_{i,1}=x_{P_{i,j}}$ and~$y_{i,m_i}=y_{P_{i,j}}$. It is
sufficient to show that there is an embedding of~$H$ in which
each path~$P_{i,j}$ is mapped to a path in~$G_S$ starting
at~$x_{i,j}$ and ending at~$y_{i,j}$.

For a path~$P_{i,j}$ encode whether each edge in~$P_{i,j}$ goes
forwards or backwards. If~$P_{i,j}$ has length~3 then, writing
\texttt{f} for an edge going from some~$A_{\ell}$
to~$A_{\ell+1}$ and \texttt{b} for an edge going
from~$A_{\ell}$ to~$A_{\ell-1}$, $t$ encodes one of the
following~$2^3=8$ possibilities:
\[\text{\texttt{fff ffb fbf fbb bff bfb bbf bbb.}}\]
Similarly there are~$2^4$ possibilities for paths of length 4
and~$2^5$ for those of length 5. We divide the paths~$P_{i,j}$
into~$56k$ subcollections based on the orientations of their
edges. It transpires that there are notational advantages in
doing this by encoding the destination of each vertex relative
to the first. More precisely, we divide the paths into
subcollections $\P_{i,t}$ with~$1\leq i\leq k$, $3\leq \ell
\leq 5$ and
\[t:\{0,1,\ldots,\ell\}\rightarrow \{-\ell,-\ell+1,\ldots,\ell\}\]
encoding one of the~$2^3+2^4+2^5=56$ possibilities discussed
above and the length~$\ell=\ell(t)$ of the paths. Note that we
always have~$t(0)=0$. For example, a path oriented~\texttt{ffb}
would have~$t:(0,1,2,3)\mapsto(0,1,2,1)$. $\P_{i,t}$ contains
all paths~$P_{i,j}$ of length~$\ell$ starting in~$A_i$ with
each vertex in~$P_{i,j}$ going to the cluster relative to~$A_i$
given by~$t$.

Observe that as~$|U_i\sm S_i| \leq \epsilon m$, every
pair~$(S_i,S_{i+1})$ is $(2\epsilon,d/2)$-super-regular. We
first use a greedy algorithm to sequentially embed those
collections~$\P_{i,t}$ containing at most~$d^2m$ paths. That
is, we pick any~$|\P_{i,t}|$ vertices in~$S_i$ to be the start
of these paths, and then construct these paths by selecting any
(distinct) neighbours of these vertices in the~$S_j$
appropriate for each vertex in each path. Each set~$S_i$ is met
by at most~$11\times 56$ of the collections so at any stage in
this process we have used at most~$6\times 11\times 56d^2m$
vertices\COMMENT{Actually a fair bit smaller than this,
$304=4*2^3+5*2^4+6*2^5$ certainly suffices and can be improved
upon, but this neither matters nor makes matters any clearer
than the given rough bound.} from any cluster~$U_i$. As we
have~$d\ll 1$ the restriction of any pair~$(S_i,S_{i+1})$ to
the remaining vertices is still $(4\epsilon,d/4)$-super-regular
and so we can indeed do this.

Having embedded all the~$\P_{i,t}$ containing few paths, we
randomly split the remaining vertices so that for each
large~$\P_{i,t}$ we have sets~$S_{i,t}^0\subseteq
S_{i+t(0)=i}$, $S_{i,t}^1\subseteq S_{i+t(1)}$, \ldots,
$S_{i,t}^\ell\subseteq S_{i+t(\ell)}$ each of
size~$|\P_{i,t}|>d^2m$.\COMMENT{Note that, for example,~$t(0)$
could equal~$t(2)$.} By Lemma~\ref{lemma:splitsuperreg} for
each large collection~$\P_{i,t}$ and for all~$0\leq r\leq
\ell-1$ the pair~$(S_{i,t}^r,S_{i,t}^{r+1})$ if~$t(r+1)>t(r)$
or the pair~$(S_{i,t}^{r+1},S_{i,t}^{r})$ if~$t(r+1)<t(r)$ is
$(4d^{-2}\epsilon,d/8)$-super-regular with high probability.
Thus for sufficiently large~$m$ we can choose a partition with
this property and apply Lemma~\ref{lemma:linking} to embed each
large~$\P_{i,t}$ within its allocated sets.
\end{proof}

\section{Skewed Traverses and Shifted Walks}\label{sec:ss}

In this section we introduce some tools needed to tweak a
random embedding of an arbitrarily oriented Hamilton cycle into
a directed Hamilton cycle of the reduced oriented graph to make
it correspond (in some sense) to the desired orientation of a
Hamilton cycle in our original graph.

The following crucial result says that our minimum semi-degree
condition implies outexpansion.

\begin{lemma}[Kelly, K\"uhn, Osthus \cite{kelly_kuhn_osthus_hc_orient}]\label{lemma:expansion}
Let~$R$ be an oriented graph with $\delta^0(R)\geq
(3/8+\alpha)|R|$ for some $\alpha>0$. If $X\subset V(R)$ with
$0<|X|\leq (1-\alpha)|R|$ then $|N^+(X)| \ge |X|+\alpha |R|/2.$
\end{lemma}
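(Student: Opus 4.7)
The plan is to proceed by contradiction. Write $n = |R|$ and $\delta := \delta^0(R) \geq (3/8+\alpha)n$, and suppose $Y := N^+(X)$ has $|Y| < |X| + \alpha n/2$; let $Z := V(R) \setminus Y$, so $|Z| > n - |X| - \alpha n/2$. I will extract a contradiction from a double count of the edges of $R$.

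First I would record the two obvious edge-count inequalities. Every $v\in X$ has $d^+(v)\geq \delta$ and, by definition of $Y$, all its out-neighbours lie in $Y$, so $e(X,Y)\geq \delta|X|$. Dually, every $w\in Z$ has $d^-(w)\geq \delta$ and, since $w\notin N^+(X)$, none of its in-neighbours lies in $X$, giving $e(V(R)\setminus X,\,Z)\geq \delta|Z|$. The two edge sets are disjoint because their heads sit in the disjoint sets $Y$ and $Z$, so together they already force
\[ e(R) \;\geq\; \delta(|X|+|Z|) \;>\; \delta\bigl(n-\tfrac{\alpha n}{2}\bigr). \]

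The next step is the quantitative punchline: compare this lower bound with the upper bound imposed by $R$ being an oriented graph. The naive bound $e(R)\leq \binom{n}{2}$ is not tight enough on its own for small $\alpha$, so I would also exploit the ``non-neighbour budget'' $n-1-d^+(v)-d^-(v)\leq (1/4-2\alpha)n-1$ available to each vertex, together with the fact that no edge of $R$ goes from $X$ to $Z$. This forbidden-direction constraint means that every pair $(v,w)\in X\times Z$ with $v\neq w$ and no backward edge $wv$ contributes to the non-neighbour tally of both $v$ and $w$; summing per-vertex non-neighbour budgets against the count of such pairs will give $e(Z,X)$ a large lower bound. Adding this third, again disjoint, family of edges into the count $e(R)\geq e(X,Y)+e(V\setminus X,Z)+e(Z,X)$ and playing the resulting inequality against $e(R)\leq \binom{n}{2}$ produces the desired contradiction, using precisely the slack that $\delta\geq (3/8+\alpha)n$ provides.

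The subtle part — and the place where I expect the argument to bite — is handling the overlap between $X$ and $Y$. The set $X_2 := X\setminus Y$ consists of vertices of $X$ with no in-neighbour in $X$, and a standard consequence of the oriented non-neighbour budget is $|X_2|\leq (1/4-2\alpha)n$; meanwhile $X\cap Y$ can be large. I would split $V$ into the four blocks $X\cap Y,\ X_2,\ Y\setminus X,\ V\setminus(X\cup Y)$ and distribute the non-neighbour budget over these blocks. Two regimes are then natural to separate: when $|X|$ is small relative to $n$ the bound $e(X,Y\setminus X)\geq \delta|X|-\binom{|X|}{2}$ already forces $|Y\setminus X|\geq \delta -(|X|-1)/2$, while when $|X|$ is close to $n-\delta$ the constraint that $Z$ has in-neighbours only in $V\setminus X$ collapses $Z$ to the empty set, giving $Y=V$ directly. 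The intermediate range is where the full counting above is needed, and checking that the inequalities close up in this range is the genuine technical step of the proof.
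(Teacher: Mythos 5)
Your overall framing (contradiction, the blocks $A=X\cap Y$, $B=X\setminus Y$, $C=Y\setminus X$, $D=V\setminus(X\cup Y)$, the bound $|X\setminus Y|\le (1/4-2\alpha)n$, and the observation that there are no $X\to Z$ edges) is on track, but two of the specific claims are false and the decisive step is left as a black box that your plan does not actually close. First, $e(X,Y)$, $e(V\setminus X,Z)$ and $e(Z,X)$ are \emph{not} pairwise disjoint: an edge from $B$ to $A$ has its tail in $Z$ and its head in $X$, so it lies in $e(Z,X)$, but it also has its tail in $X$ and head in $Y$, so it lies in $e(X,Y)$; likewise edges from $D$ to $B$ are counted in both $e(Z,X)$ and $e(V\setminus X,Z)$. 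Only the sub-family $e(D,A)$ is genuinely new. Second, ``$|X|$ close to $n-\delta$ collapses $Z$ to the empty set'' is not correct: $Z$ is nonempty whenever $|N^+(X)|<n$, and when $|V\setminus X|=\delta$ one only gets that $V\setminus X$ sends an edge to every vertex of $Z$, which does not force $Z=\emptyset$.

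More importantly, the ``intermediate range'' that you flag as the genuine technical step is where essentially all the content lies, and the tools you list are not sharp enough there. Playing the lower bound $e(R)\ge \delta|X|+\delta|Z|+e(D,A)$ against $e(R)\le\binom{n}{2}$ and bounding the non-adjacent $A$-$D$ pairs by per-vertex budgets $n-1-2\delta$ gives $e(D,A)\ge |A||D|-\tfrac12(|A|+|D|)(n-1-2\delta)$, which can be negative (for example when $|A|$ is only slightly above $n/8$ and $|D|$ is much larger), so no contradiction is produced. The inequality that actually closes the argument is sharper and purely local: since all out-edges of $A$ land in $Y=A\cup C$, one gets $\delta|A|\le e(A,V)\le\binom{|A|}{2}+|A||C|$, hence $|C|\ge\delta-(|A|-1)/2$; since all in-edges of $D$ come from $V\setminus X=C\cup D$, one similarly gets $|C|\ge\delta-(|D|-1)/2$. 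Adding these, using $|A|+|B|+|C|+|D|=n$, gives $3|C|\ge 4\delta-n+|B|$, and combining with $|C|<|B|+\alpha n/2$ yields $|B|>(1/4+5\alpha/4)n$, which contradicts $|B|\le n-2\delta\le(1/4-2\alpha)n$. A global count against $\binom{n}{2}$ wastes too much, since in the extremal H\"aggkvist configuration that inequality is slack by $\Theta(n^2)$; the local constraints above are what become tight.
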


Suppose that~$F$ is a Hamilton cycle (with the standard
orientation) of the reduced oriented graph~$R$ and relabel the
vertices of~$R$ such that~$F=V_1V_2\ldots V_M$, where we
let~$M:=|R|$. Create a new digraph $R^*$ from~$R$ by adding all
the exceptional vertices $v\in V_0$ to~$V(R)$ and adding an
edge~$vV_i$ (where~$V_i$ is a cluster containing~$m$ vertices)
whenever~$|N^+_{V_i}(v)|\geq cm$ for some given constant~$c>0$.
(Recall that~$m$ denotes the size of the clusters.) The edges
in~$R^*$ of the form~$V_iv$ are defined in a similar way.
Let~$G^c$ be the digraph obtained from the pure oriented
graph~$G^*$ by making all the non-empty bipartite subgraphs
between the clusters complete (and orienting all the edges
between these clusters in the direction induced by~$R$) and
adding the vertices in~$V_0$ as well as all the edges of~$G$
between~$V_0$ and $V(G-V_0)$.

Let~$W$ be an assignment of the vertices of an arbitrarily
oriented cycle~$C$ on~$n$ vertices to the vertices of~$R^*$
which respects edges (i.e.\@ is a digraph homomorphism from~$C$
to~$R^*$). We denote by~$a(i)$ the number of vertices of~$C$
assigned to the cluster~$V_i$. Observe that we can think of~$W$
either as a (possibly degenerate) embedding into~$G^c$ or as a
closed walk in~$R^*$. It will be useful to the reader to keep
this duality in mind when reading the rest of the proof We say
that an assignment~$W$ of~$C$ to~$R^*$ is
\emph{$\gamma$-balanced} if $\max_i|a(i)-m|\leq\gamma n$ and
\emph{balanced} if~$a(i)=m$ for all~$1\leq i\leq M$.
Furthermore, we say that an assignment
\emph{$(\gamma,\mu)$-corresponds to~$C$} if the following
conditions hold.
\begin{itemize}
\item $W$ is $\gamma$-balanced.
\item Each exceptional vertex~$v\in V_0$ has exactly one
    vertex of~$C$ assigned to it.
\item In every~$V_i\in V(R)$ at least~$m-\mu n$ of the
    vertices of~$C$ assigned to~$V_i$ have both of their
    neighbours assigned to~$V_{i-1}\cup V_{i+1}$.
\end{itemize}
We say that the assignment \emph{$\mu$-corresponds to~$C$} if
it $(0,\mu)$-corresponds to~$C$.\COMMENT{Use~$n$ not~$m$ here
as it makes things look nicer later in the proof.}

Once we have found such an assignment we can, with some work,
use Lemma~\ref{lemma:blowupused} to show that it corresponds to
a copy of~$C$ in~$G$. Our immediate aim then is to find such a
closed walk corresponding to~$C$.
%

\begin{figure}
  \centering\footnotesize
  \includegraphics[scale=0.9]{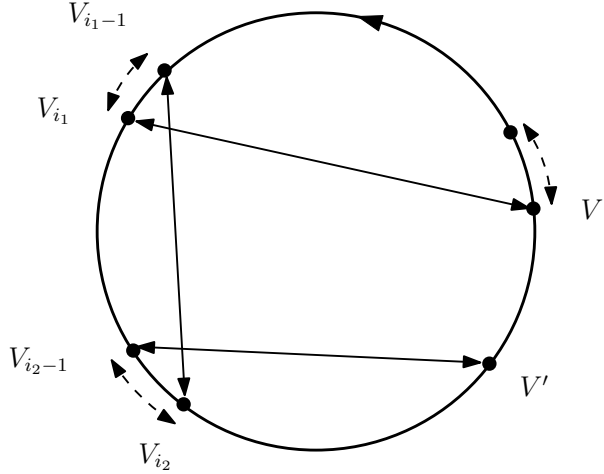}
  \caption{A skewed $V$-$V'$ traverse}\label{fig:skewed}
\end{figure}

Given clusters~$V$ and~$V'$, a \emph{skewed $V$-$V'$
traverse}~$T(V,V')$ is a collection of edges of the form
\[ T(V,V'):=VV_{i_1},V_{i_1-1}V_{i_2},V_{i_2-1}V_{i_3},\ldots ,V_{i_t-1}V'.\]
The \emph{length} of a skewed traverse in the number of its
edges minus one; so the length of the above skewed traverse
is~$t$.\COMMENT{This definition is such that the length of a
skewed traverse is the same as the number of cycles traversed
by a shifted walk derived from it.} Suppose that we have
a~$\gamma$-balanced assignment~$W$ of~$C$ to~$R^*$ and that
each vertex of~$R$ has many neutral pairs of~$C$ assigned to
it. We would like to make this a balanced embedding by
modifying~$W$. Let~$V_i,V_j$ be clusters with~$a(i)>m$
and~$a(j)<m$. If~$V_{i-1}V_j\in E(R)$ then we could replace one
neutral pair assigned to~$V_{i-1}V_iV_{i-1}$ in the embedding
with~$V_{i-1}V_jV_{i-1}$. This would reduce~$a(i)$ by one and
increase~$a(j)$ by one. Repeating this process would give the
desired balanced embedding. We can not guarantee though
that~$V_{i-1}V_j\in E(R)$ so we are forced to use skewed
traverses to achieve the same effect, which we are able to show
always exist under certain conditions. Let
\[ V_{i-1}V_{i_1},V_{i_1-1}V_{i_2},V_{i_2-1}V_{i_3},\ldots ,V_{i_t-1}V_j.\]
be a skewed $V_{i-1}$-$V_j$ traverse. Then replacing neutral
pairs starting at~$V_{i-1},V_{i_1-1},\ldots,V_{i_t-1}$ with the
edges in the skewed $V_{i-1}$-$V_j$ traverse we reduce~$a(i)$
by one, increase~$a(j)$ by one and crucially do not
alter~$a(k)$ for any~$V_k\in V(R)\sm \{V_i,V_j\}$. See
Figure~\ref{fig:skewed} for an illustration of this, where the
dashed edges represent the neutral pairs which will be replaced
by the solid edges representing the edges of the skewed
traverse. We always assume that a skewed traverse has minimal
length and thus that each vertex~$V_i\in V(R)$ appears at most
once as the first vertex of an edge in a skewed traverse.

Given vertices~$V,V'\in V(R)$ and a Hamilton cycle~$F$ of~$R$,
a \emph{shifted $V$-$V'$ walk}~$S(V,V')$ is a walk of the form
\[
S(V,V'):=V\,V_{i_1}FV_{i_1-1}\,V_{i_2}FV_{i_2-1}\ldots V_{i_t}FV_{i_t-1}\,V',
\]
where we write~$V_iFV_j$ for the path
\[
V_iFV_j:=V_iV_{i+1}V_{i+2}\ldots V_j,
\]
counting indices modulo~$|F|=k$. (The case~$t=0$, and thus a
walk $VV'$, is allowed.) We say that~$W$ traverses~$F$~$t$
times and always assume that a shifted walk~$S(V,V')$
traverses~$F$ as few times as possible. Its length is the
length of the corresponding walk in~$R$. Note that if we can
find a skewed~$V$-$V'$ traverse then we can find a shifted
$V$-$V'$~walk.

The most important property of shifted walks is that the
walk~$W-\{V,V'\}$ visits every vertex in~$R$ an equal number of
times. Observe also that by our minimality assumption each
vertex~$V_i$ is visited at most one time from a vertex other
than~$V_{i-1}$. I.e.\@ of the~$t$ times that~$V_i$ is visited
at most one does not come from winding around~$F$. This fact
will be useful later when we try and bound the number of edges
of an embedding not lying on the edges of~$F$.

As with skewed traverses, we can use shifted walks to go from
an approximate assignment~$W$ of a cycle~$C$ to a balanced
assignment. Let~$V_i,V_j$ be clusters with~$a(i)>m$
and~$a(j)<m$. If~$V_{i-1}V_j,V_jV_{i+1}\in E(R)$ then we could
replace one section of~$W$ isomorphic to~$F$
by~$V_{i-1}V_jV_{i+1}FV_{i-1}$, that is,
replace~$V_{i-1}V_iV_{i+1}$ by~$V_{i-1}V_jV_{i+1}$. This new
section has the same length as before and so would not alter
the rest of~$W$. Clearly we can not ensure that such edges
always exist. Instead we use shifted walks and replace a
section of the embedding that looks like~$FF\ldots F$ with
\[
S(V_{i-1},V_j)S(V_{j},V_{i+1})FV_{i-1}F\ldots FV_{i-1};
\]
where the~$F\ldots F$ in the new embedding contains the
appropriate number of~$F$ to ensure that it is of exactly the
same length as the section of the assignment it replaced. This
is a shifted walk from~$V_{i-1}$ to~$V_j$, then a shifted walk
from~$V_{j}$ to~$V_{i+1}$ and then wind around~$F$. By our
definition of shifted walks each cluster will have the same
number of vertices assigned to it (except~$V_{i-1}$, $V_i$
and~$V_j$) and the total number of vertices assigned will not
be altered. Clearly this method needs the cycle we're trying to
embed to contain many long sections with no changes of
orientation (and oriented in the same direction as~$F$). In the
case where the cycle we are trying to embed is close to~$C^*$,
the standard orientation of a cycle, we are indeed able to
ensure this.

\begin{corollary}\label{cor:join}
Let~$R$ be an oriented graph on $k$ vertices with
$\delta^0(R)\geq(3/8+\alpha)k$ for some $\alpha>0$ and
let~$F=V_1V_2\ldots V_k$ be a directed Hamilton cycle of~$R$.
Define~$r:=\lceil 2/\alpha\rceil$. Then for any
distinct~$V,V'\in V(R)$ there exists the following.
\begin{itemize}
    \item[(i)] A skewed $V$-$V'$ traverse of length at
        most~$r$.
    \item[(ii)] A shifted $V$-$V'$ walk traversing at
        most~$r$ cycles.
\end{itemize}
\end{corollary}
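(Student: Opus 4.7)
The plan is to prove~(i) by iteratively pushing forward from~$V$ using Lemma~\ref{lemma:expansion}, and then to deduce~(ii) essentially for free by interleaving the edges of a skewed traverse with winds of~$F$.

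For~(i), let $\phi\colon V(R)\to V(R)$ be the predecessor permutation induced by~$F$, so $\phi(V_i)=V_{i-1}$ with indices mod~$k$. Define the reachability sets $D_0:=N^+(V)$ and $D_{j+1}:=N^+(\phi(D_j))$ for $j\geq 0$. Unfolding this recursion, $V'\in D_t$ holds if and only if there exist $V_{i_1},\ldots,V_{i_t}$ with $VV_{i_1},V_{i_1-1}V_{i_2},\ldots,V_{i_t-1}V'\in E(R)$, which is precisely a skewed $V$-$V'$ traverse of length~$t$. So it suffices to show that $V'\in D_t$ for some $t\leq r$.

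Because~$\phi$ is a bijection, $|\phi(D_j)|=|D_j|$, so Lemma~\ref{lemma:expansion} yields $|D_{j+1}|\geq|D_j|+\alpha k/2$ whenever $|D_j|\leq(1-\alpha)k$. Starting from $|D_0|\geq (3/8+\alpha)k$, a short arithmetic check shows that if every $D_j$ with $j\leq r-1$ had size at most $(1-\alpha)k$, then $|D_{r-1}|\geq (3/8+\alpha)k+(r-1)\alpha k/2$ would exceed~$k$ (using $r\geq 2/\alpha$), a contradiction. Hence $|D_j|>(1-\alpha)k$ for some $j\leq r-1$. For our given~$V'$, inclusion-exclusion combined with $|N^-(V')|\geq(3/8+\alpha)k$ then gives $|N^-(V')\cap\phi(D_j)|\geq |N^-(V')|+|\phi(D_j)|-k>0$, so some $V_\ell=\phi(V_{\ell+1})$ with $V_{\ell+1}\in D_j$ satisfies $V_\ell V'\in E(R)$. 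This witnesses $V'\in D_{j+1}$ and produces a skewed $V$-$V'$ traverse of length at most $j+1\leq r$.

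For~(ii), I take the skewed traverse $VV_{i_1},V_{i_1-1}V_{i_2},\ldots,V_{i_t-1}V'$ produced by~(i) and stitch its edges together into a walk by inserting the $F$-segment $V_{i_j}FV_{i_j-1}$ between the $j$th and $(j+1)$st traverse edges. The result is the shifted walk $VV_{i_1}FV_{i_1-1}V_{i_2}FV_{i_2-1}\ldots V_{i_t}FV_{i_t-1}V'$, whose non-$F$ edges are exactly those of the skewed traverse and which contains exactly $t\leq r$ winds of~$F$. The only substantive calculation is the iterated expansion in~(i); once Lemma~\ref{lemma:expansion} is in hand there is no real obstacle.
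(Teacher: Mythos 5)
Your proof is correct and follows essentially the same strategy as the paper's: iterate Lemma~\ref{lemma:expansion} on the set of vertices reachable by a skewed traverse, invoking the bijectivity of the predecessor map so that the expansion bound applies, and then obtain the shifted walk in~(ii) by winding around~$F$ between consecutive traverse edges. The only cosmetic difference is that you track the set~$D_j$ of endpoints of traverses of length \emph{exactly}~$j$ (so the~$D_j$ need not be nested) whereas the paper works with the nested sets~$A_i$ of endpoints of traverses of length \emph{at most}~$i$; your accounting is valid since you only need the cardinalities~$|D_j|$ to grow.
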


\begin{proof}
Let~$A_i$ be the set of vertices which can be reached from~$V$
by a skewed traverse of length at most~$i$ and
let~$A_i^-:=\{V_i\in V(R):V_{i+1}\in A_i\}$.
If~$|A_i|\geq(1-\alpha)k$ then~$N^-(V')\cap A_i^- \neq
\emptyset$ and we have a skewed~$V$-$V'$ traverse of
length~$i+1$. If~$|A_i|\leq(1-\alpha)k$ then we can apply
Lemma~\ref{lemma:expansion} (here we also need that
$N^+(V)\neq\emptyset$) to get that~$|A_{i+1}|\geq |A_i|+\alpha
k/2$. Since~$|A_{r-2}|>(1-\alpha)k>k-|N^-(V')|$ we again
have~$N^-(V')\cap A_{r-2}^- \neq \emptyset$ and hence the
desired skewed traverse.

This skewed traverse also gives the desired shifted walk,
merely `wind around'~$F$ after each edge.
\end{proof}

When linking together sections of our cycle we will sometimes
need to find a path between two vertices which is not just
short but is isomorphic to a path with given length and
orientation. To do this we use the following lemma of
H\"aggkvist and Thomason.

\begin{lemma}[H\"aggkvist and Thomason
\cite{HaggkvistThomasonHamilton}]\label{lemma:exact_length}
Let~$R$ be an oriented graph on~$k$ vertices
with~$\delta^0(R)\geq(3/8+\alpha)k$ for some~$\alpha>0$. Let
$4\lceil\log_2(1/\alpha)\rceil\leq k\leq \alpha k/4$ and
let~$P$ be an arbitrarily oriented path of length~$k$. Then,
if~$k$ is large enough and $V,V'\in V(R)$ are distinct
vertices, there exists a path from~$V$ to~$V'$ isomorphic
to~$P$.
\end{lemma}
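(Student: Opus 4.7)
The plan is to build the desired $V$-$V'$ path by growing orientation-compatible sets from both endpoints and invoking Lemma~\ref{lemma:expansion} to show they must meet. Write $P=p_0p_1\ldots p_\ell$, where $\ell$ denotes the length of $P$. For $0\le i\le\ell$, let $A_i\subseteq V(R)$ consist of those $v$ for which there is a path in $R$ from $V$ to $v$ of length $i$ orientation-isomorphic to $p_0\ldots p_i$, and let $B_j$ be defined analogously for paths from $v$ to $V'$ isomorphic to the terminal segment $p_{\ell-j}\ldots p_\ell$.

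To show $|A_i|$ grows quickly, observe that if the $(i{+}1)$-st edge of $P$ is oriented forward then $A_{i+1}\supseteq N^+(A_i)\setminus X$ for a small ``forbidden'' set $X$, arising from the need to avoid revisiting path vertices and from excluding $V'$ so as to leave room for the other side (and symmetrically, using in-neighbourhoods and the corresponding in-expansion, for backward edges). Applying Lemma~\ref{lemma:expansion} inside $R-X$, whose semidegree is essentially $(3/8+\alpha)k$ since $|X|\le\ell\le\alpha k/4$, we obtain $|A_{i+1}|\ge|A_i|+\alpha k/3$ whenever $|A_i|\le(1-2\alpha)k$. When $|A_i|\le\alpha k/3$ this additive gain is itself at least a doubling, so $|A_i|$ doubles until it first exceeds $\alpha k/3$, which takes $O(\log(1/\alpha))$ steps; thereafter linear growth of $\alpha k/3$ per step brings $|A_i|$ past $k/2+\alpha k$ within a further $O(1/\alpha)$ steps. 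The hypothesis $\ell\ge 4\lceil\log_2(1/\alpha)\rceil$ guarantees enough room to run this bootstrapping on both sides.

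Setting $m:=\lfloor\ell/2\rfloor$, the same analysis applied to $B_j$ yields $|A_m|,|B_{\ell-m}|\ge(1/2+\alpha)k$, so $A_m\cap B_{\ell-m}$ has size at least $2\alpha k$. Picking any $w$ in this intersection, a path $P_1$ from $V$ to $w$ isomorphic to $p_0\ldots p_m$ exists by definition of $A_m$, and a path $P_2$ from $w$ to $V'$ isomorphic to $p_m\ldots p_\ell$ exists by definition of $B_{\ell-m}$; concatenating them yields the required $V$-$V'$ path of length $\ell$ isomorphic to $P$.

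The main obstacle will be \emph{vertex-disjointness}: $P_1$ and $P_2$ may share internal vertices, and even within the growth of $A_i$ alone one needs each endpoint to be reached by a path that does not revisit vertices. I would handle this by strengthening the induction: for each small $T\subseteq V(R)\setminus\{V,V'\}$ with $|T|\le 2\ell\le\alpha k/2$, define the restricted set $A_i^T$ of endpoints reachable by valid paths avoiding $T$, and show $|A_i^T|\ge(1/2+\alpha/2)k$ once $i\ge 2\lceil\log_2(1/\alpha)\rceil$ by repeating the expansion argument inside $R-(T\cup\{V'\})$, whose minimum semidegree is still $(3/8+\alpha/2)(k-|T|-1)$. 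Then, fixing $P_1$ first and letting $T$ be its (few) internal vertices, the sets $A_m$ and $B_{\ell-m}^T$ remain large enough to intersect, yielding a $P_2$ vertex-disjoint from $P_1$ and completing the proof.
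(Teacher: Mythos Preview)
The paper does not give its own proof of this lemma: it is quoted, with attribution, from H\"aggkvist and Thomason~\cite{HaggkvistThomasonHamilton} and is used as a black box throughout Sections~\ref{sec:prep}--\ref{sec:close}. So there is nothing in the present paper to compare your argument against.

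That said, your outline is in the right spirit---this kind of two-sided expansion is indeed how such connecting lemmas are usually proved---but as written it has a real gap in the handling of vertex-disjointness. The assertion ``$A_{i+1}\supseteq N^+(A_i)\setminus X$ for a small forbidden set~$X$'' is not justified: the vertices one must avoid in order to extend a witness path depend on \emph{which} witness path is being extended, so there is no single small~$X$ that works uniformly. Your proposed fix via the sets $A_i^T$ does not resolve this, because the recursion $A_{i+1}^T\supseteq N^+(A_i^T)\setminus(T\cup\{V'\})$ is false for exactly the same reason; what is true is that $v\in A_{i+1}^T$ iff $N^-(v)\cap A_i^{T\cup\{v\}}\neq\emptyset$, and turning that into a growth bound for small $|A_i^T|$ needs an additional idea. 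More seriously, your concatenation step is circular: you fix $P_1$ with endpoint~$w$, set $T:=V(P_1)\setminus\{V,w\}$, and then look at $A_m\cap B_{\ell-m}^T$; but a vertex $w'$ in this intersection comes with \emph{some} path from~$V$ to~$w'$ (not $P_1$), and the path from~$w'$ to~$V'$ avoids~$T$, not the internal vertices of that new path. So you have not produced two internally disjoint halves. One clean way to repair both issues is to run a uniform induction over all small avoidance sets (proving a lower bound on $|A_i^T|$ for every $T$ with $|T|\le 2\ell-i$ simultaneously) and then, once both sides exceed $(5/8-\alpha)k$, use the degree condition directly rather than Lemma~\ref{lemma:expansion}; this is essentially the work done in~\cite{HaggkvistThomasonHamilton}.
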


\section{An approximate embedding lemma}\label{sec:approx}

Our main tool in our proof of Theorem~\ref{thm:arb_ham} is the
following probabilistic result which says that we can assign a
series of paths~$P_i$ to the vertices of a small graph~$R$ such
that each vertex of~$R$ is assigned approximately the same
number of vertices. Furthermore, we show that if we have a
collection of subpaths of the~$P_i$ we can assure that every
vertex of~$R$ is assigned a reasonable number of the starting
points of these. When we talk about `greedily embedding an
oriented path~$P_i$ around a cycle~$F$ given a starting
point~$V\in V(F)$' we mean the following. Assign the first
vertex of~$P_i$ to~$V$. Given an embedding of some initial
segment of~$P_i$ which ends at~$V'\in V(F)$ assign the next
vertex of~$P_i$ to either the successor or the predecessor
of~$V'$ in~$F$ according to the orientation of the edge
in~$P_i$.

\begin{lemma}\label{lemma:embed}
Let~$R$ be an oriented graph on $k$ vertices and let~$F$ be a
Hamilton cycle in~$R$. Let~$\P=\{P_1,\ldots,P_s\}$ be a
collection of arbitrarily oriented paths on~$t$ vertices and
$\Q$ be a collection of pairwise disjoint oriented subpaths of
the~$P_i$. Then for any~$\gamma>0$ and sufficiently large $s$
there exists a map $\phi:[s]\rightarrow V(R)$ such that if the
paths are greedily embedded around~$F$ with the embedding of
each~$P(i)$ starting at~$\phi(i)$ then the following holds.
Define~$a(i)$ to be the number of vertices
in~$\bigcup_{j=1}^{s}P_j$ assigned to~$V_i$ by this embedding
and define~$n(i,\Q)$ to be the number of oriented subpaths
in~$\Q$ starting at~$V_i$. Then for all~$V_i\in V(R)$
\begin{gather}
\left|a(i)-\frac{st}{k}\right| \leq \gamma st, \\
\left|n(i,\Q)-\frac{|\Q|}{k}\right| \leq \gamma
st.\label{eq:splitQ}
\end{gather}
\end{lemma}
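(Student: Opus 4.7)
The plan is to define $\phi$ by choosing $\phi(1),\ldots,\phi(s)$ independently and uniformly at random from $V(R)$ and then to show that with positive probability every required inequality holds simultaneously. Identifying $V(R)$ with $\mathbb{Z}_k$ so that $F$ becomes $V_0V_1\cdots V_{k-1}V_0$, the greedy embedding places the $r$-th vertex of $P_j$ at $V_{\phi(j)+w_j(r)\bmod k}$, where $w_j(r)\in\mathbb{Z}$ is the signed number of forward edges among the first $r$ edges of $P_j$. Crucially, $w_j(r)$ is deterministic, depending only on the orientation pattern of $P_j$, so the analysis reduces to understanding $s$ independent uniform shifts on $\mathbb{Z}_k$.

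Expectations follow immediately. For each pair $(j,r)$ with $j\in[s]$ and $0\le r<t$, the shift $\phi(j)+w_j(r)\pmod k$ is uniform on $V(R)$ and so equals $V_i$ with probability $1/k$. Summing and using linearity gives $\E[a(i)]=st/k$, and the same argument applied to the (deterministic) start position of each $Q\in\Q$ gives $\E[n(i,\Q)]=|\Q|/k$.

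For concentration apply McDiarmid's bounded-differences inequality in the independent variables $\phi(1),\ldots,\phi(s)$. Altering $\phi(j)$ changes $a(i)$ by at most $t$ (the total number of vertices in $P_j$), and changes $n(i,\Q)$ by at most $t$, since the elements of $\Q$ contained in $P_j$ are pairwise vertex-disjoint subpaths of $P_j$. Thus
$$\Pr\bigl[\lvert a(i)-st/k\rvert>\gamma st\bigr]\le 2\exp\bigl(-2\gamma^2 s\bigr),$$
and the analogous inequality holds for $n(i,\Q)$. A union bound over the $k$ vertices $V_i$ and the two events leaves success probability at least $1-4k\exp(-2\gamma^2 s)$, which is positive once $s$ is sufficiently large in terms of $k$ and $\gamma^{-1}$; any such outcome supplies the required $\phi$.

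The main subtlety---really the only one---is the scale of concentration allowed for $n(i,\Q)$. Since $|\Q|$ could be as small as $O(1)$, one cannot hope to concentrate $n(i,\Q)$ about $|\Q|/k$ at a scale better than the Lipschitz constant $t$, i.e.\@ at scale $\gamma st$. This is precisely the scale demanded by (\ref{eq:splitQ}), so the bounded-differences estimate is just strong enough; no more refined concentration (for instance concentration tight around $|\Q|/k$ in absolute terms) is needed or available.
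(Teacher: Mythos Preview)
Your proof is correct and follows essentially the same approach as the paper: choose $\phi(1),\ldots,\phi(s)$ independently and uniformly at random, compute the expectations, apply a bounded-differences inequality with Lipschitz constant $t$ in each coordinate, and take a union bound over the $k$ clusters. Your write-up is in fact more explicit than the paper's in two respects---you spell out the greedy embedding via the deterministic offsets $w_j(r)$, and you justify the Lipschitz bound for $n(i,\Q)$ by noting that $P_j$ can contain at most $t$ disjoint subpaths from $\Q$---but the underlying argument is identical.
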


To prove it we need the following well-known probabilistic
bound (see~\cite{graph_colouring} for example).


\begin{theorem}\label{thm:azuma}
Let~$X$ be a random variable determined by~$s$ independent
trials $X_1,\ldots,X_s$ such that changing the outcome of any
one trial can affect~$X$ by at most~$c$. Then for
any~$\lambda>0$,
\[
\Pr(|X-\E(X)| > \lambda) \leq 2\exp\left({-\frac{\lambda^2}{2c^2s}}\right).
\]
\end{theorem}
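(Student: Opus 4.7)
The plan is to adopt the standard Doob martingale argument. Define $Z_0 := \E(X)$ and $Z_i := \E(X \mid X_1, \ldots, X_i)$ for $1 \leq i \leq s$, so that $(Z_i)_{i=0}^s$ is a martingale with $Z_s = X$. Writing $Y_i := Z_i - Z_{i-1}$ produces mean-zero differences with $Z_s - Z_0 = \sum_{i=1}^s Y_i$, and the goal reduces to bounding the probability that this sum exceeds $\lambda$ in absolute value.

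The first step is to establish that $|Y_i| \leq c$ almost surely. Condition on the values $x_1, \ldots, x_{i-1}$ of the first $i-1$ trials and let $X_i'$ be an independent copy of $X_i$. Both
\[
\E(X \mid X_1 = x_1,\dots,X_{i-1}=x_{i-1}, X_i) \quad\text{and}\quad \E(X \mid X_1 = x_1,\dots,X_{i-1}=x_{i-1}, X_i')
\]
are expectations over the same distribution on $X_{i+1},\ldots,X_s$, and the two integrands differ by at most $c$ pointwise because they are values of $X$ on two sample points agreeing on every coordinate except the $i$-th. Averaging over $X_i'$ yields $|Y_i| \leq c$.

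The second step is the standard Hoeffding--Azuma exponential moment estimate. Since $Y_i$ is conditionally mean zero and bounded by $c$ in absolute value, Hoeffding's lemma applied conditionally on $X_1, \ldots, X_{i-1}$ gives
\[
\E\!\left(e^{tY_i}\,\bigm|\,X_1,\ldots,X_{i-1}\right)\leq e^{t^2 c^2/2}
\]
for every $t\in\R$. Iterated use of the tower property then gives $\E(e^{t(Z_s - Z_0)}) \leq e^{t^2 s c^2/2}$, and Markov's inequality yields
\[
\Pr(Z_s - Z_0 > \lambda)\leq \exp\!\left(-t\lambda + \tfrac{1}{2}t^2 s c^2\right).
\]
Optimising at $t = \lambda/(sc^2)$ produces the bound $\exp(-\lambda^2/(2c^2 s))$ on the upper tail. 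Applying the same argument to $-X$ bounds the lower tail, and a union bound supplies the factor of $2$ in the statement.

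The main obstacle is really only the almost-sure bound $|Y_i|\leq c$: the hypothesis, phrased in sample-path language as "changing the outcome of any one trial can affect $X$ by at most $c$", has to be converted into a statement that survives conditioning on $X_1,\ldots,X_{i-1}$ and integration against $X_{i+1},\ldots,X_s$. The coupling with an independent copy $X_i'$ above is the cleanest way to do this; once it is in place the remainder is the textbook Azuma argument.
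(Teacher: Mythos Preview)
Your argument is correct and is the standard Doob-martingale/Hoeffding--Azuma proof of the bounded differences inequality. The paper, however, does not prove this statement at all: it is quoted as a well-known probabilistic bound with a reference to~\cite{graph_colouring}, so there is nothing to compare your approach against.
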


\begin{proof}[of Lemma~\ref{lemma:embed}]
We construct~$\phi$ by picking each~$\phi(i)$ independently and
uniformly at random. Observe that the assignment of any one
path~$P_i$ can change the number of vertices assigned to any
vertex of~$R$ by at most~$t$. Clearly~$\E(a(i))=st/k$. By
Theorem~\ref{thm:azuma} we have
\begin{equation*}
    \Pr(|a(i)-st/k| > \gamma st) \leq 2\exp(-\frac{\gamma^2s^2t^2}{2t^2s})
    =  2\exp(-\frac{\gamma^2s}{2}) < 1/(2k)
\end{equation*}
for~$s\gg k$.

A similar argument gives that the probability that~$n(i,\Q)$
differs too much from the expected value is at most~$1/(2k)$.
Thus the probability that there exists~$V_i$ which does not
have almost the expected number of vertices or almost the
expected number of starting points of paths in~$\Q$ assigned to
it by~$\phi$ is less than~$1$. So with positive probability a
map constructed in this manner satisfies the conclusion of the
lemma, and hence such a map exists.
\end{proof}

\section{Preparations for the Proof of
Theorem~\ref{thm:arb_ham}}\label{sec:prep}

\subsection{The Two Cases}\label{sec:cases}

We split into two cases depending on the number of neutral
pairs. Let~$G$ be an oriented graph on~$n$ vertices
with~$\delta^0(G)\geq (3/8+\alpha)n$ for some
constant~$0<\alpha\ll 1$.\COMMENT{Should it be noted somewhere
that we can assume that~$\alpha$ is small?} Let~$C$ be an
orientation of a cycle on~$n$ vertices with~$n(C)=:\lambda n$
neutral pairs. Define the following hierarchy of constants.
\[
0 < \eps_1 \ll \eps_2 \ll \eps_3 \ll \eps_4 \ll \eps_5 \ll \eps_6 \ll \alpha < 1.
\]
Let~$\Q$ be a maximal collection of neutral pairs all at a
distance of at least~3 from each other.

If~$\lambda\ll \epsilon_4$ then let~$\eps:=\eps_6$,
$\eps_A:=\eps_5$ and~$\eps^*:=\eps_4$. The proof of this case
is given in Section~\ref{sec:close}.

Otherwise we have~$\lambda \gg \epsilon_3$ and we
set~$\eps:=\eps_3$, $\eps_A:=\eps_2$ and~$\eps^*:=\eps_1$. The
proof of this case is in Section~\ref{sec:far}.

The following two sections, where we partition~$G$ and~$C$ in
preparation for our embedding, are common to both cases.

\subsection{Preparing~$G$ for the Proof of
Theorem~\ref{thm:arb_ham}}\label{sec:prepG}

Define a positive constant~$d$ and integers~$M'_A,M'_B$ (all
functions of~$\alpha$) such that
\[
0 < \eps^* \ll 1/M'_A \ll \epsilon_A \ll 1/M'_B \ll \epsilon \ll d \ll \alpha \ll 1.
\]
Chernoff type bounds applied to a random partition of~$V(G)$
show the existence of a subset~$A\subset V(G)$ with
$(1/2-\epsilon)n \leq |A| \leq (1/2-\epsilon)n$ such that every
vertex~$x\in V(G)$ satisfies
\begin{equation}\label{eq:d+split}
\frac{d^+(x)}{n}-\frac{\alpha}{10} \leq \frac{|N^+_A(x)|}{|A|} \leq \frac{d^+(x)}{n}+\frac{\alpha}{10}
\end{equation}
and similarly for~$d^-(x)$. Apply the Diregularity lemma
(Lemma~\ref{lemma:diregularity_lemma}) with
parameters~$\epsilon^2$, $d+8\eps^2$ and~$M'_B$ to~$G- A$ to
obtain a partition of the vertex set of~$G-A$ into $M_B:=k\ge
M'_B$ clusters $V_1,\dots,V_{k}$ and an exceptional set~$V_0$.
Set $B:=V_1\cup\ldots\cup V_{k}$ and
$m_B:=|V_1|=\dots=|V_{k}|$. Let~$G_B':=G[B]$, let~$R_B$ denote
the reduced oriented graph obtained by an application of
Lemma~\ref{lemma:reduced_oriented} and let~$G^*_B$ be the pure
oriented graph. By our choice of~$A$ we have
$\delta^+(G-A)/|G-A|\ge \delta^+(G)/n-\alpha/9$ and a similar
bound for~$\delta^-$. Hence we can apply
Lemma~\ref{lemma:reduced_oriented} to obtain
\begin{equation}\label{eq:delta0_RB}
\delta^0(R_B)\geq \left(\frac{\delta^0(G)}{n}-\frac{\alpha}{4}\right)|R_B|
    \geq \left(\frac{3}{8}+\frac{3\alpha}{4}\right)|R_B|.
\end{equation}
So Theorem~\ref{thm:ham_exact} gives us a Hamilton cycle~$F_B$
of~$R_B$. Relabel the clusters of~$R_B$ so that~$V_iV_{i+1}\in
E(F_B)$ for all~$i$ where we let~$V_{k+1}:=V_1$. We now apply
Lemma~\ref{lemma:super} with~$F_B$ playing the role of~$S$,
$\eps^2$ playing the role of~$\eps$ and~$d+8\eps^2$ playing the
role of~$d$. This shows that by adding at most~$4\eps^2 n$
further vertices to the exceptional set~$V_0$ we may assume
that each edge of~$R_B$ corresponds to an $\eps$-regular pair
of density at least~$d$ (in the underlying graph of~$G^*_B$)
and that each edge in~$F_B$ corresponds to an $(\eps,
d)$-super-regular pair. Note that the new exceptional set now
satisfies $|V_0|\le \eps n$.

Now apply the Diregularity Lemma with parameters $\eps_A^2/4$,
$d+2\eps_A^2$ and~$M'_A$ to~$G[A\cup V_0]$ to obtain a
partition of the vertex set of~$G[A\cup V_0]$ into
$M_A:=\ell\ge M'_A$ clusters $V'_1,\dots,V'_\ell$ and an
exceptional set~$V'_0$. Let $A':=V'_1\cup\dots\cup V'_\ell$,
let~$R_{A}$ denote the reduced oriented graph obtained from
Lemma~\ref{lemma:reduced_oriented} and let~$G^*_{A}$ be the
pure oriented graph. As before
Lemma~\ref{lemma:reduced_oriented} implies that
$\delta^0(R_A)\geq(3/8+3\alpha/4)|R_A|$ and so, as before, we
can apply Theorem~\ref{thm:ham_exact} to find a Hamilton
cycle~$F_A$ of~$R_A$. Then as before, Lemma~\ref{lemma:super}
implies that by adding at most~$\eps_A^2 |A\cup V_0|$ further
vertices to the exceptional set~$V'_0$ we may assume that each
edge of~$R_A$ corresponds to an $\eps_A$-regular pair of
density at least~$d$ and that each edge in~$F_A$ corresponds to
an $(\eps_A, d)$-super-regular pair. Finally
define~$G_B:=G[B\cup V_0']$ and~$n_B:=|G_B|$ and observe that
we now have
\begin{equation}\label{eq:V'0}
|V'_0|\leq \epsilon_A |A\cup V_0|/2 < \epsilon_A n_B.
\end{equation}
In both cases of our proof we now have
\[
0 < \eps^* \ll 1/M_A \ll \epsilon_A \ll 1/M_B \ll \epsilon \ll d \ll \alpha \ll 1.
\]

\subsection{Preparing~$C$}\label{sec:prepC}

We would like to divide~$C$ into a number of paths and use
Lemma~\ref{lemma:embed} to obtain an~$\epsilon$-balanced
assignment of~$C$ to~$R$. Since we have split our graph~$G$
into two parts, we have to split~$C$ into two paths~$P_A$ and
$P_B$ and embed these into (an oriented graph similar
to)~$G[A']$ and~$G_B$ respectively.

Define $r:=4\lceil \log_2(4/\alpha) \rceil$.
Lemma~\ref{lemma:exact_length} tells us that if~$P$ is an
orientation of a path of length~$r$ then between any two
distinct vertices in~$V(R_B)$ or in~$V(R_A)$ there exists a
path isomorphic to~$P$.

Define
\[
s:=\lfloor(\log n)^2\rfloor,\quad t:=\left\lfloor\frac{n-(s+1)(r-1)}{s+2}\right\rfloor - 1 \approx \frac{n}{(\log n)^2}.
\]
Recall that~$\Q$ is a maximal collection of neutral pairs
in~$C$ all at a distance of at least~3 from each other. If~$\Q$
is large, \ie we are in the case where~$C$ is far from~$C^*$,
let~$v^*$ be a vertex in~$C$ such that the subpath of~$C$ of
length~$n/2$ following~$v^*$ and the subpath of~$C$
preceding~$v^*$ both contain at least~$2|\Q|/5$ elements
of~$\Q$. Divide~$C$ into overlapping paths (by which we mean
paths sharing endvertices)
\[
C:=Q_1P_1Q_2P_2\ldots Q_{s-1}P_{s-1}Q_sP_sQ^*P^*
\]
where their lengths satisfy~$\ell(P_i)=t$,
$\ell(Q_i)=\ell(Q^*)=r$ and $2t\leq\ell(P^*)<3t$ and~$Q_1$
starts at~$v^*$. Let~$s_B\in \N$ be such that
\[
1<n_B-s_B(t+r)<\ell(P^*)
\]
\COMMENT{We can do this as~$t+r+2<2t\leq \ell(P^*)$.}and let
\[
P_B:=P_B^*Q_1P_1\ldots Q_{s_B}P_{s_B}
\]
where~$P_B^*$ is an end-segment of~$P^*$ of such length as
to ensure~$\ell(P_B)+1=n_B$. Let
\[
P_A:=Q'_1P'_1\ldots Q'_{s_A}P'_{s_A}Q^*P_A^*
\]
where~$Q_i':=Q_{s_B+i}$, $P_i':=P_{s_B+i}$, $s_A:=s-s_B$
and~$P_A^*$ is an initial-segment of~$P^*$ which
overlaps~$P_B^*$ in exactly one place. Observe that we now have
\begin{equation}\label{eq:n_B}
n_B=s_Bt+s_Br+\ell(P_B^*)+1=|V(P_B)|
\end{equation}
and define
\[
n_A:=n-n_B=s_At+(s_A+1)r+\ell(P_A^*)+1 = |V(P_A)|-2.
\]

\section{Cycle is Far From~$C^*$}\label{sec:far}

\subsection{Approximate Embedding}

First we use the probabilistic tools in
Section~\ref{sec:approx} to assign the paths~$P_i$ to the
clusters of~$R_B$ in such a way as to ensure that all the
clusters are assigned approximately the same number of vertices
and the neutral pairs are relatively evenly distributed.
Let~$\Q_B\subset\Q$ consist of all neutral pairs from~$\Q$
which are contained in the~$P_i$ and moreover are at a distance
of at least three from the ends of the~$P_i$. Apply
Lemma~\ref{lemma:embed} to~$R_B$,
$\P_B:=\{P_1,P_2\ldots,P_{s_B}\}$ and~$\Q_B$ with~$\epsilon^*$
as~$\gamma$ to obtain an embedding of the~$P_i$ into~$V(R_B)$
with
\[
\left|a(i)-\frac{s_Bt}{M_B}\right| \leq \epsilon^* s_Bt, \quad
\left|n(i,\Q_B)-\frac{|\Q_B|)}{M_B}\right| \leq \epsilon^* s_B t.
\]
for all~$V_i\in V(R_B)$. (Recall that~$a(i)$ is defined to be
the number of vertices of $P_B$ assigned to the cluster~$V_i$
by the embedding.) In a slight abuse of notation let~$n(i)$ be
the number of neutral pairs in~$\Q_B$ starting at~$V_i$. Note
that
\begin{equation}\label{eq:a_im_B}
\left|a(i)-m_B\right|
    \stackrel{\eqref{eq:n_B}}{\leq} \left|a(i)-\frac{s_Bt}{M_B}\right| + \left|\frac{s_Br+3t}{M_B}\right|
    \leq \left|a(i)-\frac{s_Bt}{M_B}\right| + \epsilon^* m_B.
\end{equation}

The requirement that the neutral pairs in~$\Q$ are at a
distance of at least three from each other means that~$|\Q|\geq
n(C)/4$. By the observation in Section~\ref{sec:prepC} we know
that~$P_B$ contains at least~$2|\Q|/5 \geq \lambda n/10$
neutral pairs. The paths~$Q_i$ and~$P_B^*$ together contain
fewer than~$s_Br+3t$ neutral pairs and at most~$4s_B$ neutral
pairs can be in the~$P_i$ but within a distance of at most
three from a~$Q_i$. Thus for all~$i$
\begin{equation*}
n(i) \geq \frac{\lambda n}{10M_B} - \epsilon^* s_Bt - (s_Br+3t+4s_B)
\geq \frac{\lambda n_B}{6M_B} -2\epsilon^* n_B
\geq \frac{\lambda m_B}{7}.
\end{equation*}
For all~$2\leq i\leq s_B$ we can join~$P_{i-1}$ and~$P_i$ by a
path in~$R_B$ isomorphic to~$Q_i$ using
Lemma~\ref{lemma:exact_length}. Furthermore we can greedily
extend~$P_1$ backwards by a path isomorphic to~$P_B^*Q_1$. This
will increase~$a(i)$ by at most~$s_Br+3t < \epsilon^* m_B$
for~$n$ sufficiently large. We now have an assignment of~$P_B$
to the clusters of~$R_B$ which we can think of as a walk~$W_B$
in~$R_B$.

\subsection{Incorporating the Exceptional
Vertices}\label{subsec:far_except}

Let~$G_B^c$ be the digraph obtained from the pure oriented
graph~$G_B^*$ by making all the non-empty bipartite subgraphs
between the clusters complete (and orienting all the edges
between these clusters in the direction induced by~$R_B$) and
adding the vertices in~$V_0'$ as well as all the edges of~$G$
between~$V_0'$ and $V(G_B-V_0')$. Our next aim is to
incorporate the exceptional vertices~$V_0'$ into the
walk~$W_B$. We do this by considering the following extension
of~$R_B$. Define~$R_B^*\supseteq R_B$ to be the digraph formed
by adding to~$R_B$ the vertices in~$V_0'$ and, for $v\in V_0'$
and $V_i\in V(R_B)$, the edge $vV_i$ if $|N^+_G(v)\cap V_i|
> \alpha m_B/10$ and $V_iv$ if  $|N^-_G(v)\cap V_i|
> \alpha m_B/10$.

\begin{figure}
  \centering\footnotesize
  \includegraphics[scale=1]{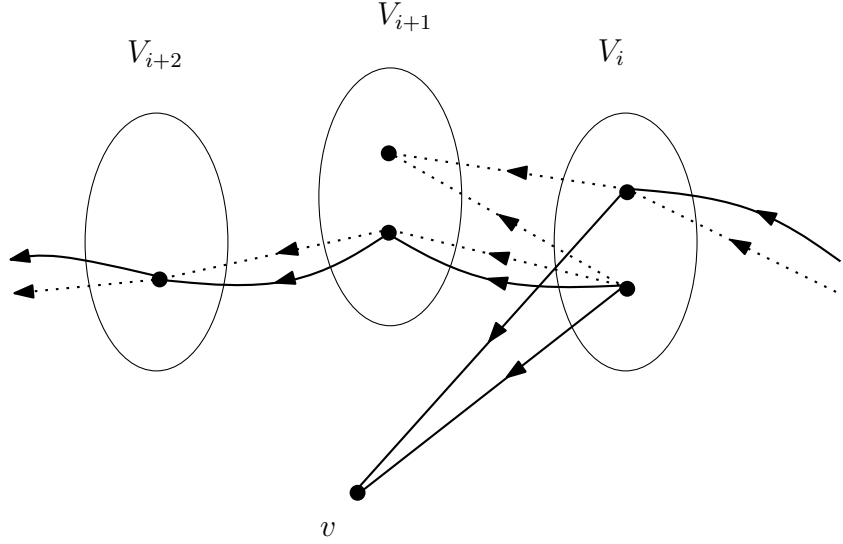}
  \caption{Incorporating an exceptional vertex when~$C$ is far from~$C^*$.}\label{fig:with_clusters}
\end{figure}

Then for each~$v\in V_0'$ pick an inneighbour $V_i\in V(R_B)$
and change the assignment of one neutral pair currently mapped
to~$V_iV_{i+1}V_i$ to~$V_ivV_i$. We can always find such an
inneighbour as~\eqref{eq:d+split} implies that each exceptional
vertex sees at least a three-eighths proportion of the vertices
in~$V(G_B)$. This reduces~$a(i+1)$ and~$n(i)$ by one.
Figure~\ref{fig:with_clusters} contains an illustration of
this, where we consider~$W_B$ as being in~$G_B^c$ and the
dotted lines as the section of the embedding to be replaced by
the solid lines. After doing this for every exceptional vertex
we will have that for all~$V_i\in V(R_B)$
\begin{equation}\label{eq:a_ibound}\begin{split}
\left|a(i)-m_B\right| &\stackrel{\eqref{eq:a_im_B}}{\leq}
    \left|a(i)-\frac{s_Bt}{M_B}\right| + \epsilon^* m_B \\
&\leq \left(\epsilon^* s_Bt+\epsilon_A m_B+|V_0'|\right) + \epsilon^* m_B
\stackrel{\eqref{eq:V'0}}{<} 4\epsilon_A n_B,
\end{split}\end{equation}
where the second term in the second line comes from greedily
embedding the~$Q_i$. We also still have a reasonable number of
neutral pairs starting at each cluster of~$R_B$ for all~$V_i\in
V(R_B)$:
\begin{equation*}
n(i) \geq \frac{\lambda m_B}{7} -  |V_0'| > \frac{\lambda m_B}{7} - \epsilon_A n_B > \frac{\lambda m_B}{8}.
\end{equation*}
Note that of the~$a(i)$ vertices of~$P_B$ assigned to
any~$V_i\in V(R)$, at most~$\epsilon_A n_B + 2|V_0'| \leq
3\epsilon_A n_B$ do not have their neighbours assigned
to~$V_{i-1}\cup V_{i+1}$, where the first term came from
the~$Q_i$ and the second came from incorporating the
exceptional vertices. Thus we currently have a
$(4\epsilon_A,3\epsilon_A)$-corresponding embedding of~$P_B$
into~$R_B^*$.

\subsection{Adjusting the Embedding}

We now adjust~$W_B$ to obtain a~$5\epsilon_AM_B$-corresponding
assignment of~$P_B$ to~$R_B^*$; i.e. we adjust~$W_B$ to ensure
that~$a(i)=m_B$ for all~$V_i\in V(R_B)$. Recall from
Corollary~\ref{cor:join} that between any two vertices in~$R_B$
there exists a skewed traverse of length at most~$r':=\lceil
4/\alpha \rceil$. Then for each cluster~$V_i\in V(R_B)$
with~$a(i+1)>m_B$ pick~$V_j\in V(R_B)$ with~$a(j)<m_B$ and find
a skewed~$V_i$-$V_j$ traverse of length $q\leq
r'$:\COMMENT{Such a~$j$ must always exist as $\sum
a(i)=m_BM_B$.}
\[
V_iV_{k_1},V_{k_1-1}V_{k_2},V_{k_2-1}V_{k_3},\ldots
,V_{k_q}V_{k_q-1},V_{k_q-1}V_j.
\]
As discussed in Section~\ref{sec:ss} we can use this skewed
traverse to modify~$W_B$ to reduce~$a(i+1)$ by one,
increase~$a(j)$ by one and leave the number of vertices
assigned to every other cluster of~$R_B$ the same. We do this
by, for every~$0\leq p\leq q$, replacing a neutral
pair~$V_{k_p-1}V_{k_{p}}V_{k_p-1}$ in~$W_B$
by~$V_{k_p-1}V_{k_{p+1}}V_{k_p-1}$ where we
define~$V_{k_0-1}:=V_i$ and~$V_{k_{q+1}}:=V_j$.

Since $\sum_{i=1}^{M_B}|a(i)-m_B|\leq 4\epsilon_A M_B n_B$,
doing this will consume at most~$4\epsilon_A M_B n_B$ neutral
pairs starting at any vertex of~$R_B$. This is fine though as
for all~$V_i\in V(R_B)$ we have~$n(i)\geq \lambda m_B/8 \gg
4\epsilon_A M_B n_B$. Each cluster~$V_i$ now has at
most~$3\epsilon_An_B+4\epsilon_A M_B n_B<5\epsilon_A M_B n_B$
vertices of~$P_B$ assigned to it that do not have both their
neighbours assigned to~$V_{i-1}\cup V_{i+1}$. Hence we have
constructed a~$5\epsilon_AM_B$-corresponding embedding~$W_B$
of~$P_B$ into~$R_B^*$.

\subsection{Finding a copy of~$P_B$
in~$G_B$} \label{subsec:CBinGBfar}

We will now use Lemma~\ref{lemma:blowupused} to find a copy
of~$P_B$ in~$G_B$. To do this we use~$W_B$ to find an
embedding~$W_B'$ of~$P_B$ into~$G_B$ such that
\begin{itemize}
\item Every vertex of~$W_B$ in~$V_0'$ is unchanged in~$W_B'$.
\item Each appearance of a cluster of~$R_B$ in~$W_B$ is
    replaced by a unique vertex in the corresponding
    cluster in~$G_B$.
\item Every edge of~$W_B$ which does not lie upon an edge
    of~$F_B$ is mapped to an edge of~$G_B$.
\end{itemize}
First we split~$W_B$ into two digraphs~$W_B^1$ and~$W_B^2$.
Let~$W_B^1$ consist of all maximal walks
\[u_{i,1}u_{i,2}\ldots u_{i,\ell_i}\]
in~$W_B$ of length at least three whose edges all lie on~$F_B$.
Let~$W_B^2$ consist of everything not in~$W_B^1$. Then~$W_B^2$
is a union of walks~$v_{i,1}v_{i,2}\ldots v_{i,k_i}$, where we
relabel if necessary to ensure that~$u_{i,1}=v_{i-1,k_{i-1}}$
and~$u_{i,\ell_i}=v_{i,1}$. In the next paragraph we will
greedily find an embedding of~$W_B^2$ into~$G_B$ which will
satisfy the third requirement above.

The walks in~$W_B^2$ are of one of three types. The first type
comes from the incorporation of an exceptional vertex, in which
we have an exceptional vertex~$x\in V_0'$ and a cluster~$V_i\in
V(R_B)$ with $|N^-_G(x)\cap V_i|
> \alpha m_B/10$. In this case we choose any two distinct vertices~$u,v\in N^-_G(x)\cap V_i$,
which we can do as there are at most~$|V_0'|\ll \epsilon m_B
\ll \alpha m_B/10$ exceptional vertices. The second type comes
from the paths~$Q_i$ and the path~$P_B^*$. These we find
in~$G_B^*$ (and hence in~$G_B\supseteq G_B^*$) greedily. We can
do so as the total length of the~$Q_i$ is at most $s_B r +2t
\ll \epsilon m_B$ and all their edges are assigned to edges
in~$R_B$ corresponding to~$\epsilon$-regular pairs of density
at least~$d$ in~$G_B^*$. The final type are pairs of
edges~$ij,ji$ with~$i,j\in V(R_B)$ which come from the skewed
traverses used to ensure that the correct number of vertices
of~$P_B$ were assigned to each vertex of~$R_B$. There are at
most~$5\epsilon_A M_Bn\ll \epsilon m_B$ of these and so we can
again find these greedily.\COMMENT{Take a maximal matching, has
size at least~$(1-\epsilon)m_B$. Then done
as~$\epsilon/\alpha\ll 1$.} Note that our requirement that all
the neutral pairs in~$\Q$ are at a distance of at least three
from each other and the ends of the~$P_i$ implies that we have
now considered all possible walks in~$W_B^2$. To satisfy the
second condition above we simply assign each vertex of~$W_B$
not already assigned to a (distinct) vertex in the
corresponding cluster in~$G_B$. As~$W_B$ is balanced (i.e.\@
$W_B$ assigns exactly~$m_B$ vertices to each cluster) we can do
this.

For all~$i$ let~$S_i$ consist of the vertices of~$G_B-V_0'$ to
which the vertices of~$W_B^1$ that are not at the end of a path
have been assigned. We can now apply
Lemma~\ref{lemma:blowupused} to~$G_B-V_0'$ with~$W_B^1$ as~$H$,
the~$u_{i,1}$ and~$u_{i,\ell_i}$ as the~$x_P$ and~$y_P$
respectively and the~$S_i$ as just defined. Combining this with
the embedding of~$W_B^2$ into~$G$ gives us a copy of~$P_B$
in~$G_B$.

\subsection{Finding a copy of~$C$ in~$G$}\label{subsec:CinGfar}

Recalling how we `chopped up'~$C$ at the start of this section,
let~$u,v\in V(G_B)$ be the vertices to which the endpoints
of~$P_B$ were assigned. To complete the proof of this case we
find a copy of~$P_A$ in~$G_A:=G[A'\cup\{u,v\}]$ starting at~$v$
and ending at~$u$. We find a copy of~$P_A$ exactly as we found
the copy of~$P_B$ with three differences. Firstly there are no
exceptional vertices. Secondly, recalling that
\[
P_A:=Q'_1P'_1\ldots Q'_{s_A}P'_{s_A}Q^*P_A^*,
\]
we require that  the embeddings of~$Q_1'$ and~$P_A^*$ start and
end at~$v$ and~$u$ respectively. Since~$Q_1'$ is long enough
for Lemma~\ref{lemma:exact_length} we can specify the cluster
to which its initial vertex is assigned and use
Lemma~\ref{lemma:exact_length} to join it to~$P'_1$. We
embed~$P_A^*$ greedily and use~$Q^*$ and
Lemma~\ref{lemma:exact_length} to connect it with the rest of
the embedding. Hence we can indeed start and end at the
required vertices. This doesn't affect the constants in the
rest of the proof.\COMMENT{Beyond $+2\ll m_B$ possibly.} Since
the number of exceptional vertices and the imbalances created
by the approximate embedding are both small (and small as
functions of~$M_A$) we can proceed exactly as before and find
the desired cycle~$C$ in~$G$. The calculations work as before
as a result of us only having two exceptional vertices. The
equation~\eqref{eq:a_ibound} becomes
\begin{align*}
\left|a(i)-m_A\right| &\leq
    \left|a(i)-\frac{s_At}{M_A}\right| + \epsilon^* m_A \\
&\leq \left(\epsilon^* s_At+\epsilon_A m_A+|\{u,v\}|\right) + \epsilon^* m_A
\leq 4\epsilon_A m_A.
\end{align*}
Hence from Section~\ref{subsec:CBinGBfar} we now have
\[
\sum_{i=1}^{M_A}|a(i)-m_A|\leq 4\epsilon_A M_A m_A,
\]
which is fine as we will have that~$n(i)\geq \lambda m_A/8 \ll
4\epsilon_A M_A m_A$ for all clusters~$V_i'\in V(R_A)$.

\section{Cycle is Close to~$C^*$}\label{sec:close}

Our argument closely follows that in the previous section, the
difference being in the means of correcting imbalances. To
correct imbalances we will need long sections of~$P_B$ with no
changes in orientation.
Define~$\ell_B:=\lceil\frac{4}{\alpha}\rceil M_B$, which is at
least the maximum length of a shifted walk between two vertices
in~$R_B$. As before we split up~$C$ into~$P_A$ and~$P_B$, the
only difference being that we do not need a special
vertex~$v^*$ this time. Let~$\Q_B'$ consist be the largest
possible collection of paths in~$P_B$ of length~$3\ell_B$ all
at a distance of at least 3 from each other, oriented in the
same direction and containing no changes in orientation. We
will call these \emph{long runs}. There are at least
\[
m(P_B,\Q_B')\geq \frac{n_B}{3\ell_B+6} -2\lambda n \geq \frac{\alpha n_B}{14M_B}
\]
of these in~$P_B$. (We subtract~$2\lambda n$ not~$\lambda n$ as
both neutral pairs~$V_iV_{i+1}V_i$ and their
inverse~$V_iV_{i-1}V_i$ kill possible long runs.)

Let~$\Q_B$ be the subset of~$\Q_B'$ containing those long runs
contained in the~$P_i$, at a distance of at least 4 from the
ends of all the~$P_i$ and all oriented in the same direction.
We assume that these are all oriented in the same direction
as~$F_B$. Keeping only long runs oriented in one direction
loses us at most half of them. The paths~$Q_i$, the
path~$Q^*P_B^*$ and the 3 vertices neighbouring them in
the~$P_i$ in each direction can intersect at most~$2s+2$ of the
long runs and so, abusing notation slightly,
\[
m(\P_B) \geq \frac{\alpha n_B}{28M_B}-2s-2 \geq \frac{\alpha n_B}{30M_B}
\]
for sufficiently large~$n$, where we recall
that~$\P_B:=\{P_1,P_2\ldots,P_{s_B}\}$. Similarly
defining~$\ell_A:=\lceil\frac{4}{\alpha}\rceil M_A$ and~$\Q_A'$
and~$\Q_A$ in the obvious way we have $m(\P_A) \geq \alpha
(n_A)/30M_A$.

Apply Lemma~\ref{lemma:embed} to~$R_B$, $\Q_B$ and~$\P_B$
with~$\eps^*$ as~$\gamma$ to obtain an embedding of the~$P_i$
into~$V(R_B)$ with
\begin{equation}\label{eq:near_approx}
\left|a(i)-\frac{s_Bt}{M_B}\right| \leq \epsilon^* s_Bt, \quad
m(i) \geq \frac{\alpha n_B}{30M_B^2} - \epsilon^* s_Bt \geq \frac{\alpha n_B}{32M_B^2}
\end{equation}
for all~$V_i\in V(R_B)$, where we write~$m(i)$ for the number
of elements of~$\Q_B$ whose initial vertex is assigned
to~$V_i\in V(R)$.

For all~$2\leq i\leq s_B$ we can join~$P_{i-1}$ and~$P_i$ by a
path in~$R_B$ isomorphic to~$Q_i$ using
Lemma~\ref{lemma:exact_length}. Furthermore we can greedily
extend~$P_1$ backwards by a path isomorphic to~$P_B^*Q_1$. This
will increase~$a(i)$ by at most~$s_Br+2t\leq \epsilon_A m_B$
for~$n$ sufficiently large. We now have an embedding of~$P_B$
into~$R_B$ which we can think of as a walk~$W_B$ in~$R_B$.

\begin{figure}
  \centering\footnotesize
  \includegraphics[scale=0.8]{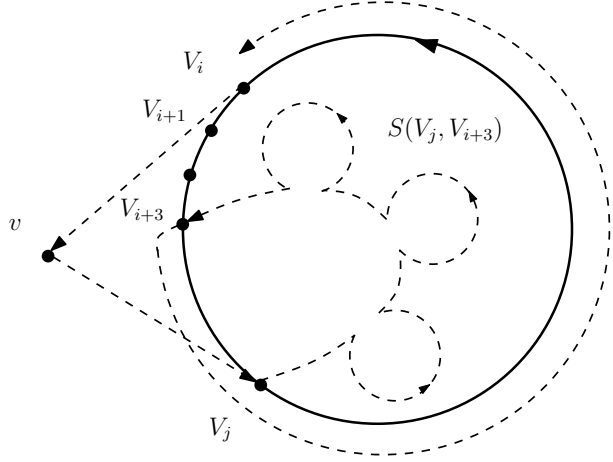}
  \caption{Incorporating an exceptional vertex when~$C$ is close to~$C^*$.}\label{fig:exceptional}
\end{figure}

Let~$G_B^*$, $G_B^c$ and~$R_B^*$ be defined exactly as in
Section~\ref{subsec:far_except}. Let~$v\in V_0'$ be an
exceptional vertex and let~$V_iv,vV_j\in E(R_B^*)$. ($V_i$
and~$V_j$ exist by~\eqref{eq:d+split}.) Take a long run
in~$\Q_B$ whose initial vertex is currently assigned to~$V_i$.
Since~$M_B$ divides~$\ell_B$ it also ends at~$V_i$. We cannot
replace the long run simply by~$V_ivV_jF_B\ldots F_B$ because
this would not end at~$V_i$. Thus it would require us to alter
the rest of our approximate embedding, possibly
causing~\eqref{eq:near_approx} to no longer hold. Instead we
use shifted walks and a `jump' to ensure that our modification
incorporates~$v$ into our walk and does not alter~$a(i)$
or~$m(i)$ significantly for any cluster of~$R_B$. We replace
the long run starting at~$V_i$ with the following walk
\[
V_ivV_jS(V_j,V_{i+3})F_BF_B\ldots F_BV_{i},
\]
where~$S(V_j,V_{i+3})$ is a shifted walk from~$V_j$
to~$V_{i+3}$. The number of~$F_B$ is chosen so that the new
section has exactly the same length as the long run it
replaces. This is illustrated in Figure~\ref{fig:exceptional}.
This is a walk that goes out to~$v$, back to~$V_j$, follows a
shifted walk to~$V_{i+3}$ and then winds around~$F$ until we
have a walk of length~$3\ell_B$ ending at~$V_i$. This new walk
visits~$V_{i+1}$ and~$V_{i+2}$ one time fewer than previously
and~$V_j$ one time more. Observe that the shifted walk by
definition visits every cluster in~$R_B$ the same number of
times, which allows us to observe that we still end at~$V_i$.
Repeating this for each exceptional vertex creates a new
assignment now satisfying
\begin{align*}
\left|a(i)-m_B\right| &\stackrel{\eqref{eq:n_B}}{\leq}
    \left|a(i)-\frac{s_Bt}{M_B}\right| + \left|\frac{s_Br+2t}{M_B}\right| \\
&\leq \left(\epsilon_A s_Bt+\epsilon_A m_B+|V_0'|\right) + \epsilon_A m_B
\leq 3\epsilon_A n_B.
\end{align*}
for all~$i$. We also still have a reasonable number of long
runs starting at each cluster.
\begin{align*}
m(i) &\geq \frac{\alpha n_B}{32M_B^2} - |V_0'| \geq \frac{\alpha n_B}{40M_B^2}.
\end{align*}
Note that of the~$a(i)$ vertices of~$P_B$ assigned to~$V_i\in
V(R)$, at most
\[\epsilon_A m_B + 4|V_0'| \leq 5\epsilon_A n_B \]
do not have their neighbours assigned to~$V_{i-1}\cup V_{i+1}$.
The first term here comes from connecting the~$P_i$ and the
second term from incorporating the exceptional vertices: each
exceptional vertex has one direct edge to or from a given
cluster in~$R_B$ and the shifted walk can add at most two edges
outside~$F_B$ to each cluster. Thus we currently have a
$(5\epsilon_A,3\epsilon_A)$-corresponding assignment of~$P_B$
into~$R_B^*$.

\subsection{Correcting the imbalances}\label{subsec:close_adjust}
We now adjust our current assignment of~$P_B$ to~$R_B^*$ to
obtain a~$15\epsilon_A$-corresponding assignment, i.e. we
adjust~$W_B$ to ensure that~$a(i)=m_B$ for all~$V_i\in V(R_B)$.
To do this we find a pair~$V_i,V_j\in V(R_B)$ such
that~$a(i)>m_B$ and~$a(j)<m_B$ and replace a long run starting
at~$V_{i-1}$ with the following walk:
\[ S(V_{i-1},V_j)S(V_j,V_{i+1})F_B\ldots F_BV_{i-1},\]
where the number of~$F_B$ is chosen to ensure that the new
section has length~$3\ell_B$. This walk removes the assignment
of one vertex to~$V_i$, assigns one extra vertex to~$V_j$ and
does not change the number of vertices assigned to all other
clusters in~$R_B$. Since~$\sum_{i=1}^{M_B}a(i)=m_BM_B$ we can
always find such a pair unless we have corrected all the
imbalances. Each pair requires a long run and we still have at
least~$\alpha n_B/40M_B^2 \gg 3\epsilon_A n_B$ of these
starting at each cluster and so can indeed correct all the
imbalances. This leaves us with a balanced assignment with at
most
\[
3\epsilon_A n_B + 4\cdot 3 \epsilon_A n_B = 15\epsilon_A n_B
\]
edges outside~$F_B$ from each vertex. Hence there are at
most~$15\epsilon_A M_Bn_B \ll \epsilon m_B$ edges in total not
in a path of length at least~3 all of whose edges lie on~$F_B$
or not lying entirely on~$F_B$. This is exactly the same
position as in Section~\ref{subsec:CBinGBfar}. We can now
proceed as before to first find a copy of~$P_B$ in~$G_B$ and
then repeat the procedure with~$P_A$ (using~$\Q_A$ not~$\Q_B$)
to find the desired cycle~$C$ in~$G$. This completes this
section and the proof of Theorem~\ref{thm:arb_ham}.

\section*{Acknowledgements}

The author would like to thank Oliver Cooley, Daniela K\"uhn
and Deryk Osthus for their assistance in the creation of this
paper.

\medskip

{\footnotesize \obeylines \parindent=0pt

Luke Kelly
School of Mathematics
University of Birmingham
Edgbaston
Birmingham
B15 2TT
UK
}

{\footnotesize \parindent=0pt

\it{E-mail address}:
\texttt{kellyl@maths.bham.ac.uk}

\begin{thebibliography}{10}

\bibitem{AlonShapiraTestingDigraphs} N.~Alon and A.~Shapira,
    Testing subgraphs in directed graphs,
    \emph{Journal of Computer and System Sciences}~{\bf 69} (2004), 354--382. %

\bibitem{BGbook} J.~Bang-Jensen and G.~Gutin, \emph{Digraphs:
    Theory, Algorithms and Applications},
    Springer 2000.

\bibitem{CKKO} D.~Christofides, P.~Keevash, D.~K\"uhn and
    D.~Osthus, Finding {H}amilton cycles in robustly expanding
    digraphs, submitted.


\bibitem{CKKO_semiexact} D.~Christofides, P.~Keevash, D.~K\"uhn
    and
    D.~Osthus, A semi-exact degree condition for {H}amilton cycles in digraphs, submitted.

\bibitem{GhouilaHouri} A.~Ghouila-Houri, Une condition suffisante d'existence d'un circuit
    hamiltonien, \emph{C.R.~Acad.~Sci.~Paris}~\textbf{25} (1960), 495--497.

\bibitem{Grant} D.~Grant, Antidirected {H}amiltonian cycles in
    digraphs, \emph{Ars Combinatoria}~\textbf{10} (1980),
    205--209.

\bibitem{HaggkvistHamilton} R.~H\"aggkvist, {H}amilton cycles
    in oriented graphs,
    \emph{Combin. Probab. Comput.}~\textbf{2} (1993), 25--32.

\bibitem{HT_arb} R.~H{\"a}ggkvist and
    A.~Thomason, Oriented {H}amilton cycles in digraphs,
    \emph{Journal of Graph Theory} {\bf 20} (1995), 471--479.

\bibitem{HaggkvistThomasonHamilton} R.~H{\"a}ggkvist and
    A.~Thomason, Oriented {H}amilton cycles in oriented graphs,
    in \emph{Combinatorics, Geometry and Probability},
    Cambridge University Press (1997), 339--353.

\bibitem{haggtom} R.~H\"aggkvist and C.~Thomassen,
    On pancyclic digraphs,
    \emph{J.~Combinatorial Theory B} {\bf 20} (1976), 20--40.

\bibitem{KKO_exact} P.~Keevash, D.~K\"uhn and D.~Osthus, An exact minimum degree
    condition for {H}amilton cycles in oriented graphs,
    \emph{Journal of the London Math. Soc.}~\textbf{79} (2009),
    144--166.

\bibitem{thesis} L.~Kelly, On cycles in dense directed graphs,
    PhD thesis, School of Mathematics, University of Birmingham, in preparation.

\bibitem{KKO_pan} L.~Kelly, D.~K\"uhn and
    D.~Osthus, Cycles of given length in oriented graphs,
    submitted

\bibitem{kelly_kuhn_osthus_hc_orient} L.~Kelly, D.~K\"uhn and
    D.~Osthus, A {D}irac type result on {H}amilton cycles in
    oriented graphs, \emph{Combin.~Probab.~Comput.}~\textbf{17}
    (2008), 689--709.

\bibitem{JKblowup} J.~Koml\'os, The Blow-up lemma,
\emph{Combin. Probab. Comput.}~\textbf{8} (1999), 161--176.

\bibitem{KSi} J.~Koml\'os and M.~Simonovits, {S}zemer\'edi's
    {R}egularity {L}emma and its applications in graph theory,
    \emph{Bolyai Society Mathematical Studies 2, Combinatorics,
    Paul Erd\H{o}s is Eighty (Vol.~2)} (D.~Mikl\'os,
    V.~T.~S\'os and T.~Sz\H{o}nyi eds.), Budapest (1996),
    295--352.

\bibitem{Komlos_Szemeredi_Blowup_Lemma} J.~Koml{\'o}s, G.~
    S{\'a}rk{\"o}zy, and E.~Szemer{\'e}di, Blow-up lemma,
    \emph{Combinatorica}~\textbf{17} (1997), 109--123.

\bibitem{KOT_hc} D.~K\"uhn, D.~Osthus and A.~Treglown,
    {H}amiltonian degree sequences in digraphs, submitted.

\bibitem{graph_colouring} M.~Molloy and B.~Reed,
    \emph{Graph Colouring and the Probabilistic Method},
    Springer 2002.

\bibitem{NashWilliams} C.~St.~J.~A.~{N}ash-{W}illiams,
    {H}amiltonian circuits, \emph{Studies in Math.}~\textbf{12}
    (1975), 301--360.

\bibitem{Nathanson} M.~Nathanson, The {C}accetta-{H}\"aggkvist
    conjecture and additive number theory, \emph{ArXiv
    Mathematics e-prints} (2006).

\bibitem{Thomason} A.~Thomason, Paths and cycles in
    tournaments, \emph{Trans. Amer. Math. Soc.}~\textbf{296}
    (1986), 167--180.

\bibitem{thomassen_79} C.~Thomassen,
    Long cycles in digraphs with constraints on the degrees, in
    \emph{Surveys in Combinatorics} (B.~Bollob\'as ed.),
    \emph{London Math.~Soc.~Lecture Notes}~\textbf{38} (1979),
    211--228, Cambridge University Press.

\bibitem{young_05_extremal} A.~Young, Extremal problems for
    dense graphs and digraphs, PhD thesis, School of
    Mathematics, University of Birmingham (2007).
\end{thebibliography}
\end{document}